\def \sfS{\mathsf S}
\newcommand{\be}{\begin{equation}}
\newcommand{\ee}{\end{equation}}
\newcommand{\dalign}[1]{\[\begin{aligned} #1\end{aligned}\]}
\newcommand{\er}{\mathrm{e}}
\newcommand{\llsym}[1]{\,\mathop{\ll}\limits_{#1}\,}
\newcommand{\reduceit}[2]{\llbracket #1 \rrbracket_{#2}}
\newcommand{\lcm}{\operatorname{lcm}}
\newcommand{\GL}{\operatorname{GL}}
\newcommand{\linkMR}[1]{\href{https://mathscinet.ams.org/mathscinet/relay-station?mr=#1}{#1}}
\title[Multiple sums with the M{\"o}bius function]
{Multiple sums with the M{\"o}bius function}
\author[W. D. Banks]{William D. Banks}
\address{Department of Mathematics, 
University of Missouri, 
Columbia MO 65211, USA.}
\email{bankswd@missouri.edu}
\author[I. E. Shparlinski] {Igor E. Shparlinski}
\address{School of Mathematics and Statistics,
University of New South Wales,
Sydney NSW 2052, Australia}
\email{igor.shparlinski@unsw.edu.au}
\date{\today}
\begin{document}

\begin{abstract}
We establish nontrivial bounds for bilinear sums
involving the M\"obius function 
evaluated over solutions to a broad class of equations.
Several of our results may be regarded as M\"obius-function
analogues of the ternary Goldbach problem. 
By contrast, the binary versions of our results remain out of 
reach, much like the binary Goldbach problem. Nevertheless, we make 
partial progress in this direction by restricting the range of the 
third variable as far as possible.
\end{abstract}  

\thanks{MSC Numbers: Primary: 11P32, 11N56;
Secondary: 11L07, 11B30.}

\thanks{Keywords: Goldbach-type theorems, M\"obius function,
Davenport theorem.}

\date{}

\maketitle


\tableofcontents

\newpage{\large\section{Introduction}}

\subsection{General conventions}
Throughout the paper, 
the letters $p$ and $q$ 
(with or without subscripts) are used to denote
prime numbers, and all other letters denote natural
numbers unless specified otherwise.

We recall that the notations $U = O(V)$, $U \ll V$, and $ V\gg U$  
are all understood to mean that  $|U|\leqslant c V$ for some 
positive constant $c$, which may depend on the parameters indicated 
below the respective symbols. For example, in~\eqref{eq:notation 
example} (see below), the implied constant in the error term 
depends only on the real parameters $C$ and $\Delta$.

Throughout, we assume that the arguments of all logarithmic
expressions are large enough to ensure the formulas are 
well-defined.

\subsection{Motivation}
The \emph{ternary Goldbach problem} (or weak Goldbach conjecture)
asserts that every odd integer greater than five can be
expressed as the sum of three primes. The problem amounts
to establishing the positivity of the sum
\[
S_\Lambda(N)\defeq\sum_{n_1+n_2+n_3=N}
\Lambda(n_1)\Lambda(n_2)\Lambda(n_3)
\]
for every odd integer $N\ge 7$, where $\Lambda$ is the
von Mangoldt function, and the sum runs
over ordered triples $(n_1,n_2,n_3)$ of natural numbers.
In 1923, Hardy and Littlewood showed that this is true
for all sufficiently large odd $N$ under the
\emph{Generalized Riemann Hypothesis} (GRH).  
In 1937, Vinogradov eliminated the dependency on GRH, proving
unconditionally that
the weak Goldbach conjecture holds for all sufficiently
large odd numbers. In 2013, the problem was completely resolved
by Helfgott~\cite{Helf}.

Sums similar to $S_\Lambda(N)$ have been studied by various authors.
For example, Fouvry and Ganguly~\cite[eqn.~(10)]{FouvGang} have
obtained the bound
\be\label{eq:Rama}
S_\tau(N)\defeq
\sum_{p_1+p_2+p_3=N}\tau(p_1)\ll N^{15/2}\er^{-c\sqrt{\log N}}
\ee
where $\tau$ is the Ramanujan $\tau$-function,
and the sum runs over ordered triples $(p_1,p_2,p_3)$ of primes; their method
also yields the same bound for sums with $\tau(p_1)\tau(p_2)\tau(p_3)$.
More recently, Gafni and Robles~\cite{GafniRobles} have investigated
additive analogues of $S_\Lambda(N)$; they have
given (cf.~\cite[Thm.~1.2]{GafniRobles}) estimates of the form
\be
\begin{split}
S_\Omega(N)& \defeq\sum_{n_1+n_2+n_3=N}\Omega(n_1)\Omega(n_2)\Omega(n_3)\\
\label{eq:notation example}
& =\mathfrak S(N,C,\Delta)\frac{N^2}{2}
+O_{C,\Delta}\(\frac{N^2(\log\log N)^3}{(\log N)^C}\),
\end{split}
\ee
where for each $C>0$, the singular series $\mathfrak S(N,C,\Delta)$
appearing in the main term is defined explicitly, albeit in a somewhat technical manner, in 
terms of $C$ and another flexible parameter
$\Delta\in(0,\frac12)$.

We remark that binary analogues of these results
(i.e., those involving only two variables) remain out 
of reach, the situation being similar to the binary Goldbach
problem. Nonetheless, there are several works that make partial 
progress by imposing restrictions on the third variable;
see, for example,~\cite{Cai}. In~\S\ref{sec:small var},
we show that our results allow much stronger restrictions 
in the case of the M\"obius function
$\mu$ than in the case of the von Mangoldt function $\Lambda$.
More specifically, a significant part of 
our motivation comes from the problem
of finding nontrivial upper bounds for the sums
\be
\label{eq:Short Var}
S_\mu(H,N)\defeq
\ssum{
n_1+n_2+n_3=N\\n_3\le H}\mu(n_1n_2n_3) 
\ee
with $H$ as small as possible. We succeed in establishing such
bounds with $H$ logarithmically small relative to $N$; 
see~\S\ref{sec:small var}  for details.


\subsection{Set-up}
In the present paper, we study analogues of $S_\Lambda(N)$
with the M{\"o}bius function $\mu$. Our results are broadly
applicable and are formulated in terms of a general
collection of data
\be\label{eq:Data}
{\mathbf D}\defeq(\cA,\cB, \cN,\wp,{\tt u},{\tt v},f,g)
\ee 
consisting of
\begin{itemize}
\item Finite intervals $\cA,\cB,\cN\subseteq \N$;  
\item A bracket polynomial $\wp:\Z\to\Z$ of complexity $\delta$ (see below for a definition of complexity);
\item Complex weights ${\tt u}=\{{\tt u}_n\}$
and ${\tt v}=\{{\tt v}_n\}$ with $\ell^\infty$ norms
satisfying $\|{\tt u}\|_\infty\le 1$ and $\|{\tt v}\|_\infty\le 1$;
\item Injective functions
\[
f:\cA\cap{\rm Supp}({\tt u})\to\Z,\qquad
g:\cB\cap{\rm Supp}({\tt v})\to\Z.
\]
\end{itemize}

We recall that a \emph{bracket polynomial} $p:\Z\to\R$
is an object  formed
from the scalar field $\R$ and an indeterminate $n$ using
finitely many instances of the standard
arithmetic operations $+$, $\times$ along with the integer part
operation $\fl{\cdot}$ and the fractional part operation $\{\cdot\}$;
see~\cite{BergLieb,GreenTao}. An ``ordinary''
polynomial $P(n)$ (i.e., $P\in\R[X]$)
is a bracket polynomial. More exotic examples of bracket polynomials
include $\{n^3\sqrt{7}+\fl{\pi n}\}$ and
$n\sqrt{3}\fl{n\sqrt{5}}\{\er\,n^{11}\}$. As in~\cite{GreenTao}, 
we define the \emph{complexity} $\delta$ of a bracket
polynomial $\wp$ to be the least number of operations
$+,\times,\fl{\cdot},\{\cdot\}$ required to write down $\wp$.

For a given data set ${\mathbf D}$, our primary objects of study
are the sums
\be\label{eq:SmuDM-defn}
\sfS_\mu({\mathbf D};M)\defeq
\ssum{n_1\in \cA,\,n_2\in \cB,\,n_3\in\cN\\f(n_1)+g(n_2)+\wp(n_3)=M}
{\tt u}_{n_1}{\tt v}_{n_2}\,\mu(n_1n_2n_3)\qquad(M\in\Z).
\ee
Observe that the trivial bound
\be\label{eq:triv-bd} 
\bigl|\sfS_\mu({\mathbf D};M)\bigr|
\le\min\{|\cA|,|\cB|\}\,|\cN|
\ee
follows at once from the injectivity condition on $f$ and $g$.

{\large\section{Main Results}}

\subsection{Results over long intervals}
\label{sec;long int} 

The results of this section can be applied to intervals whose
left endpoint lies near the origin.

\begin{thm}\label{thm:main1-long}
Let $C>0$, $N\ge 10$. Let ${\mathbf D}$ be
a collection  of data~\eqref{eq:Data}
such that
\[
\cA\defeq[1,A],\qquad\cB\defeq[1,B],\qquad \cN\defeq[1,N],
\]
where $\log N\ge(\log\max\{A,B\})^\eps$ with some
fixed $\eps>0$. Then,
the sums given by~\eqref{eq:SmuDM-defn} satisfy the uniform bound
\[
 \sfS_\mu({\mathbf D};M)
\llsym{C,\eps,\delta}(A+B)N(\log N)^{-C}.
\]
\end{thm}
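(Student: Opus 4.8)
The plan is to estimate $\sfS_\mu(\mathbf D; M)$ by summing over $n_3 \in \cN$ first, holding $n_3$ fixed and treating the inner double sum over $n_1, n_2$ as a bilinear form with $\mu$. For each fixed $n_3$, the constraint $f(n_1) + g(n_2) = M - \wp(n_3)$ together with the injectivity of $g$ determines $n_2$ uniquely from $n_1$ (if a solution exists at all), so the inner sum has the shape $\sum_{n_1} \tilde{\tt u}_{n_1} \mu(n_1) \cdot \big(\text{something involving }\mu(n_2(n_1))\mu(n_3)\big)$. The danger is that this collapses to a one-dimensional sum with no cancellation available. So the real plan must instead be to view the whole object as a genuine bilinear sum in $(n_1, n_2)$ with the third variable providing extra averaging.

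More precisely, I would proceed as follows. First, split $\mu(n_1 n_2 n_3)$ using multiplicativity on coprime arguments — writing $\mu(n_1 n_2 n_3) = \mu(n_1)\mu(n_2)\mu(n_3) \cdot \mathbf 1[(n_1,n_2)=(n_1,n_3)=(n_2,n_3)=1]$ — and handle the coprimality conditions by a standard Möbius/divisor-closing argument, reducing to sums over arithmetic progressions at the cost of a harmless $(\log N)^{O(1)}$ factor. Second, detect the equation $f(n_1)+g(n_2)+\wp(n_3)=M$ using an exponential-sum (circle-method) expansion: $\mathbf 1[k = 0] = \int_0^1 e(\alpha k)\,d\alpha$, which turns the sum into $\int_0^1 \big(\sum_{n_1} {\tt u}_{n_1}\mu(n_1) e(\alpha f(n_1))\big)\big(\sum_{n_2}{\tt v}_{n_2}\mu(n_2)e(\alpha g(n_2))\big)\big(\sum_{n_3}\mu(n_3)e(\alpha \wp(n_3))\big)e(-\alpha M)\,d\alpha$. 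Third — this is the crux — bound the $n_3$-factor. Because $\wp$ is a bracket polynomial of bounded complexity, the sum $\sum_{n_3 \le N} \mu(n_3) e(\alpha \wp(n_3))$ should admit a power-saving (or at least $(\log N)^{-C}$) bound \emph{uniformly in $\alpha$}; this is exactly a Davenport-type estimate (the title mentions Davenport's theorem), combining the Green–Tao machinery for bracket polynomials / nilsequences with cancellation of $\mu$ against nilsequences, or, more elementarily, Vinogradov–Vaughan bilinear decomposition of $\mu$ combined with Weyl/van der Corput bounds for bracket-polynomial phases. Fourth, bound the remaining two factors trivially by $A$ and $B$ respectively (using $\|{\tt u}\|_\infty, \|{\tt v}\|_\infty \le 1$ and Cauchy–Schwarz to pair them sensibly against the $L^1$ of the integrand), so that pulling the uniform $n_3$-bound out of the integral yields $\sfS_\mu(\mathbf D;M) \ll (A+B) N (\log N)^{-C}$, after accounting for the $L^2$ masses via Parseval. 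The condition $\log N \ge (\log\max\{A,B\})^\eps$ enters precisely here: it guarantees that the $(\log N)^{-C}$ saving from the $n_3$-factor dominates any $(\log \max\{A,B\})^{O(1)}$ losses incurred in steps 1–2 and in the Cauchy–Schwarz pairing.

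The main obstacle is Step 3: establishing the uniform bound $\sup_\alpha |\sum_{n_3 \le N}\mu(n_3)e(\alpha\wp(n_3))| \ll_{C,\delta} N(\log N)^{-C}$ for an arbitrary bracket polynomial $\wp$ of complexity $\delta$. For ordinary polynomials this is classical (Davenport for $\alpha n$, and Vaughan-type arguments plus Hua / Weyl for higher degrees, uniformly in the coefficients), but bracket polynomials require the correspondence of Bergelson–Leibman / Green–Tao between bracket-polynomial phases and polynomial nilsequences, together with the Möbius-orthogonality estimate for nilsequences. Managing the dependence of all implied constants only on $C$, $\eps$, $\delta$ (and not on the specific $\wp$, $f$, $g$, or on $A$, $B$, $M$) will require care: one wants the nilmanifold and its complexity to be controlled purely by $\delta$. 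A secondary technical point is that $f$ and $g$ are completely arbitrary injective functions, so the $n_1$- and $n_2$-sums carry no usable structure; this forces the argument to extract \emph{all} its savings from the $n_3$-variable alone, which is why one cannot hope to do better than a logarithmic saving and why the trivial bound \eqref{eq:triv-bd} is the right benchmark to beat.
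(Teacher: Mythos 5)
Your high-level outline is right: expand by orthogonality (circle method), split $\mu(n_1n_2n_3)$ with multiplicativity and Möbius-detected coprimality, and extract all the savings from the $n_3$-sum via a Davenport/Green--Tao bound for $\sum_{n}\mu(n)\e(\alpha\wp(n))$ that is uniform in $\alpha$ (and, after the coprimality closing, in the arithmetic progression modulus). This matches the paper's strategy.

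However, there is a genuine gap in Step~1: you assert that the divisor-closing for the coprimality condition costs only a ``harmless $(\log N)^{O(1)}$ factor.'' That is not correct, and the rest of the argument does not close without fixing it. After writing $\mathbf 1[\gcd(n_1n_2,n_3)=1]=\sum_{d\mid\gcd(n_1n_2,n_3)}\mu(d)$, one gets
\[
S=\sum_{d} \mu(d)\,I(d),
\]
where $I(d)$ involves $n_3$ ranging over the AP $d\mid n_3$ and $n_1,n_2$ constrained by $d\mid n_1n_2$. Feeding the Davenport--Green--Tao bound into the $n_3$-factor and applying Cauchy--Schwarz/Parseval to the $n_1,n_2$-factors gives $|I(d)|\ll N(\log N)^{-C'}\cdot\sqrt{AB}\,\tau(d)^2/\sqrt d$. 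But $d$ ranges up to $N$ (since $d\mid n_3\le N$), and $\sum_{d\le N}\tau(d)^2/\sqrt d\asymp \sqrt N\,(\log N)^3$. This blows the bound up by a factor of $\sqrt N$, which no amount of logarithmic saving in the Davenport bound can absorb. The paper's fix is a threshold split at $D=(\log N)^{2C+2}$: for $d\le D$ one uses the Davenport-type estimate as you describe; for $d>D$ one abandons oscillation entirely and counts solutions trivially, using that $d\mid n_3$ leaves $\le N/d$ choices for $n_3$, that some divisor $d_0\mid d$ with $d_0\ge\sqrt d$ must divide $n_1$ or $n_2$, and that the injectivity of $f,g$ then pins down the last variable, yielding $I(d)\ll (A+B)N\,\tau(d)/d^{3/2}$, which is summable over $d>D$ to give $(A+B)ND^{-1/2}\log N$. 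Balancing the two ranges at $D=(\log N)^{2C+2}$ gives the theorem. Without this dichotomy --- oscillatory bound for small moduli, trivial counting for large moduli --- the coprimality closing is not harmless, and this is the missing ingredient in your proposal.

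A secondary, smaller point: the hypothesis $\log N\ge(\log\max\{A,B\})^\eps$ is not (as you suggest) needed to dominate generic log-losses; it enters at a single precise place, namely in bounding $\sum_{e\le\max\{A,B\}}\sqrt{\gcd(d,e)}/e\ll\tau(d)\log\max\{A,B\}\le\tau(d)(\log N)^{1/\eps}$ when resolving the inner coprimality condition $\gcd(n_1,n_2)=1$ inside $J(d)$. One then compensates for the factor $(\log N)^{1/\eps}$ by applying the Davenport bound with a larger exponent; this is a bookkeeping detail, but it is worth noting that the range hypothesis is tied to this specific sum rather than to diffuse $(\log N)^{O(1)}$ losses.
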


We remark that the bound of Theorem~\ref{thm:main1-long} matches
the strength of the best known bounds for the error term in the 
asymptotic formula for $S_\Lambda(N)$; see, for instance,
Vaughan~\cite[Thm.~3.4]{Vau}.

Theorem~\ref{thm:main1-long} has numerous applications,
producing bounds that are new in many cases.
In~\S\ref{sec:applications}, we use Theorem~\ref{thm:main1-long} to
recover the bounds  
\begin{alignat*}{3}
\ssum{
p+q\le x}&\mu(p+q)
\llsym{C}x^2(\log x)^{-C}&&\qquad(p,q~\text{prime}),\\
\ssum{
k+\ell\le x}&\mu^\nu(k\ell)\,\mu(k+\ell)
\llsym{C}x^2(\log x)^{-C}&&\qquad(k,\ell\in\N,~\nu=1\text{~or~}2), 
\end{alignat*}
which are implicit in the results of~\cite{MTW}.
The following bounds, also established in~\S\ref{sec:applications},
have not appeared previously in the literature:
\begin{align*}
\ssum{p_k,p_\ell\le x\\\gcd(k\,\ell,\,p_k+p_\ell)=1}
\mu^2(k\ell)\,\mu(p_k+p_\ell) &
\llsym{C} x^2(\log x)^{-C} \qquad(p_j=\text{$j$-th prime}),\\
\ssum{a,b,c\le m\\a+b\equiv c^2\bmod m}\mu(abc) &
\llsym{C}m^2(\log m)^{-C}\qquad(a,b,c,m\in\N),
\end{align*}
As yet another example, notice that Theorem~\ref{thm:main1-long}
implies the bound
\be\label{eq:SmuNdefnbd}
S_\mu(N)\defeq\ssum{
n_1+n_2+n_3=N}\mu(n_1n_2n_3)
\llsym{C} N^2(\log N)^{-C}.
\ee
Our interest in the sum $S_\mu(N)$
served as the starting point for the present work;
the bound~\eqref{eq:SmuNdefnbd} appears to be new.
Note that $S_\mu(N)$ is equal to $S_\mu(N,N)$ in the notation
of~\eqref{eq:Short Var}.

We remark that, although the sums $S_\tau(N)$ in~\eqref{eq:Rama}
and $S_\mu(N)$ in~\eqref{eq:SmuNdefnbd}
are fundamentally different in nature, the primary factor
influencing the disparity in strength between the bounds~\eqref{eq:Rama} and~\eqref{eq:SmuNdefnbd} is the potential existence of 
\emph{Siegel zeros}. By the work of Hoffstein and
Ramakrishnan~\cite{HoffRama}, Siegel zeros cannot exist
for $L$-functions attached to cusp forms on $\GL(2)$, however they
cannot be ruled out for Dirichlet \text{$L$-functions}. 
Accordingly, if one assumes that the zeros of Dirichlet
$L$-functions are sufficiently well-behaved, then the bound~\eqref{eq:SmuNdefnbd} can be strengthened.
The following result is proved in~S\ref{sec:zeros}.

\begin{thm}\label{thm:noSiegel}
Suppose the conditions of Theorem~\ref{thm:main1-long} hold,
and assume further that $\wp$ is a linear polynomial
in $\Z[X]$. Then
\begin{itemize}
\item[$(i)$] If Siegel zeros do not exist for Dirichlet
$L$-functions, then the sums given by~\eqref{eq:SmuDM-defn} satisfy the uniform bound
\be\label{eq:uni-one}
\sfS_\mu({\mathbf D};M)
\ll(A+B)N\er^{-c\sqrt{\log N}}
\qquad(M\in\Z)
\ee 
for some effectively computable constant $c>0$.

\item[$(ii)$] If every Dirichlet $L$-function $L(s,\chi)$
is nonzero in the half-plane $\sigma\ge\sigma_\bullet$ for some
$\sigma_\bullet<1$, then
\be\label{eq:uni-two}
 |\sfS_\mu({\mathbf D};M)|
\le (A+B)N^{c(\sigma_\bullet)+o(1)} \qquad(N\to\infty),
\ee
where
\[
c(\sigma_\bullet)\defeq\begin{cases}
(4+6\sigma_\bullet-7\sigma_\bullet^2)/(8-4\sigma_\bullet)
&\quad\hbox{if $1/2\le\sigma_\bullet\le 4/7$},\\
9/10
&\quad\hbox{if $4/7\le\sigma_\bullet\le 3/5$,}\\
(\sigma_\bullet+3)/4
&\quad\hbox{if $3/5\le\sigma_\bullet<1$}.
\end{cases}
\]
\end{itemize}
\end{thm}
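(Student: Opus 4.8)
The plan is to reduce the estimate for $\sfS_\mu(\mathbf D;M)$ to a weighted count of zeros of Dirichlet $L$-functions, so that the two hypotheses on zero-free regions translate directly into the two claimed bounds. The starting point is Theorem~\ref{thm:main1-long} itself: since $\wp$ is now assumed linear, say $\wp(n_3)=\alpha n_3+\beta$ with $\alpha,\beta\in\Z$, the innermost sum over $n_3\in[1,N]$ with $\alpha n_3=M-\beta-f(n_1)-g(n_2)$ pins $n_3$ to a single value (when it exists), so that $\mu(n_3)$ becomes a bounded coefficient depending on $n_1,n_2$, and $\sfS_\mu(\mathbf D;M)$ collapses to a genuinely \emph{bilinear} sum of the shape $\sum_{n_1,n_2}\widetilde{\tt u}_{n_1}\widetilde{\tt v}_{n_2}\mu(n_1)\mu(n_2)\mathbf 1[\text{linear condition on }f(n_1),g(n_2)]$. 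Whatever dyadic decomposition and Vaughan/Heath-Brown identity splitting was used to prove Theorem~\ref{thm:main1-long} should be re-run, but now keeping track of the cancellation in $\sum_{n\le x}\mu(n)n^{-it}$ and in short-interval/arithmetic-progression sums of $\mu$ \emph{quantitatively} in terms of a zero-free region, rather than merely invoking the Siegel--Walfisz theorem (which is what forces the $(\log N)^{-C}$ loss).

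The key technical input is the classical dictionary: under hypothesis $(i)$ the prime number theorem in arithmetic progressions, and hence $\sum_{n\le x,\,n\equiv a(q)}\mu(n)\ll x\er^{-c\sqrt{\log x}}$ uniformly for $q\le\er^{c'\sqrt{\log x}}$, holds \emph{because} the standard Vinogradov--Korobov-type zero-free region is not contaminated by an exceptional real zero; feeding this into the major-arc analysis of the bilinear sum and using the usual Vaughan-identity treatment on the minor arcs (where one only needs a power of $\log$ saving, comfortably supplied) yields~\eqref{eq:uni-one}. For part $(ii)$, the hypothesis $L(s,\chi)\neq0$ for $\sigma\ge\sigma_\bullet$ gives, via Perron's formula and contour shifting, the bound $\sum_{n\le x}\mu(n)\chi(n)\ll x^{\sigma_\bullet+o(1)}$, and more generally controls the relevant Dirichlet polynomials $\sum_{n\sim X}\mu(n)\chi(n)n^{-it}$ on average; the three-case formula for $c(\sigma_\bullet)$ is exactly what one obtains by optimizing a Vaughan (or Heath-Brown) decomposition of $\mu(n_1)\mu(n_2)$ into ``type I'' and ``type II'' bilinear pieces against the available $\sigma_\bullet$-savings, with the breakpoints $4/7$ and $3/5$ marking where different terms in the optimization dominate — this is the familiar pattern from estimates for $\sum_{p\le x}\mu\text{-type}$ sums and from the circle-method treatment of ternary problems conditional on quasi-RH.

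I would carry out the steps in the following order: (1) use linearity of $\wp$ to eliminate $n_3$ and rewrite $\sfS_\mu(\mathbf D;M)$ as a bilinear sum in $n_1,n_2$ with bounded coefficients and a linear constraint relating $f(n_1)$ and $g(n_2)$; (2) apply a Vaughan/Heath-Brown identity to each of $\mu(n_1)$ and $\mu(n_2)$ and dyadically decompose, reducing everything to type-I and type-II sums; (3) detect the linear constraint by additive characters (circle method) or Dirichlet characters as appropriate, converting the constraint into $L$-function data; (4) for $(i)$, quote the Siegel-zero-free prime number theorem in progressions to bound major arcs and a crude log-power bound for minor arcs; (5) for $(ii)$, quote the quantitative $\sum_{n\le x}\mu(n)\chi(n)\ll x^{\sigma_\bullet+o(1)}$ bound together with a fourth-moment/large-sieve estimate for Dirichlet polynomials, and optimize the dyadic ranges to obtain $c(\sigma_\bullet)$. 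The main obstacle I anticipate is step (5): getting the \emph{sharp} exponent $c(\sigma_\bullet)$ rather than a weaker one requires combining the pointwise $\sigma_\bullet$-bound with mean-value estimates in just the right way, handling the type-I ranges (where one variable is short) separately from the type-II ranges (where both are of comparable size), and verifying that the three regimes in the definition of $c(\sigma_\bullet)$ really are the outcome of the optimization — a bookkeeping-heavy but standard piece of analysis. A secondary subtlety is ensuring all estimates are uniform in $M$ and in the injective functions $f,g$, which should follow because $f,g$ enter only through the linear constraint and the bounds $\|{\tt u}\|_\infty,\|{\tt v}\|_\infty\le1$.
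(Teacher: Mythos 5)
Your proposal takes a genuinely different route from the paper and, as written, contains a gap that I think is fatal. The paper's proof is a near-verbatim re-run of the proof of Theorem~\ref{thm:main1-long}: one writes $\sfS_\mu(\mathbf D;M)=\sum_{d}\mu(d)I(d)$ via the circle method and inclusion--exclusion on $\gcd(n_1n_2,n_3)$, and for $d\le D$ the innermost sum over $n_3\le N$ with $d\mid n_3$ of $\mu(n_3)\e(\alpha\wp(n_3))$ is bounded, uniformly in $\alpha$, by Lemma~\ref{lem:disc-noSiegel} (for part~$(i)$) or Lemma~\ref{lem:disc-zero strip} (for part~$(ii)$) in place of the Green--Tao-based Lemma~\ref{lem:Davenportish Bound}. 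This is the \emph{only} place linearity of $\wp$ enters: those two lemmas descend from the Hajela--Smith~\cite{HajSmith} and Baker--Harman/Zhang~\cite{BaHa,Zhang} bounds for $\sup_\alpha\bigl|\sum_{n\le N}\mu(n)\e(\alpha n)\bigr|$, which are available only for the linear phase $\alpha n$. The factor $J(d)$ controlling the $(n_1,n_2)$-part is bounded exactly as in~\eqref{eq:Jd bound}; the range $d>D$ is bounded as in~\eqref{eq:sum-large-d}; and one re-balances $D$. The exponent $c(\sigma_\bullet)=\tfrac12\bigl(1+b(\sigma_\bullet)\bigr)$ then falls out of setting $N^{b(\sigma_\bullet)}D^{1/2}\asymp ND^{-1/2}$ with $b(\sigma_\bullet)$ taken directly from~\eqref{eq:bsigdefn}; it is \emph{quoted}, not re-derived.

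Your step~(1) --- using linearity of $\wp$ to eliminate $n_3$ and pass to a bilinear sum in $(n_1,n_2)$ --- inverts this logic and discards the only source of cancellation. All the savings in the paper's argument come from summing $\mu(n_3)\e(\alpha\wp(n_3))$ over the \emph{long} range $n_3\le N$; once $n_3$ is pinned as a function of $(n_1,n_2)$, the surviving factor $\mu(n_1n_2n_3)$ is just one bounded sign attached to each pair, and it does not even separate as $a_{n_1}b_{n_2}$. Worse, the leftover bilinear sum offers no arithmetic leverage because $f$ and $g$ are \emph{arbitrary} injective functions: the resulting constraint on $f(n_1)+g(n_2)$ is merely membership in some interval together with a congruence, imposed on the range of an arbitrary injection, so a Vaughan or Heath-Brown decomposition of $\mu(n_1)\mu(n_2)$ has nothing to bite on. The premise of your step~(2) is also off: the paper's proof of Theorem~\ref{thm:main1-long} does not use Vaughan's identity at all, but imports the $n_3$-cancellation as a black box from Green--Tao~\eqref{eq:GT-long-gen}. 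Finally, your step~(5) overstates what has to be done: the breakpoints $4/7$ and $3/5$ in $c(\sigma_\bullet)$ are not the result of a fresh type-I/type-II optimization carried out in this paper; they are inherited verbatim from $b(\sigma_\bullet)$ under the affine map $b\mapsto(1+b)/2$.
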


In particular, under GRH, by Theorem~\ref{thm:noSiegel} we have
\[
|\sfS_\mu({\mathbf D};M)|\le (A+B)N^{7/8+o(1)}\qquad(N\to\infty). 
\]

We expect that the condition that $\wp$ is a
\emph{linear} polynomial can be removed  with more effort; see Remark~\ref{rem:Conj1}. Thus, we propose the following conjecture.

\begin{conjecture}\label{conj:one}
Bounds similar to those of Theorem~\ref{thm:noSiegel} hold for any
bracket polynomial $\wp:\Z\to\Z$ of complexity $\delta$,
with constants that may depend only on~$\delta$.
\end{conjecture}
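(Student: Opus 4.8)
\emph{A plan towards Conjecture~\ref{conj:one}.}
The strategy is to run the proof of Theorem~\ref{thm:noSiegel} and to replace only its one analytic ingredient that uses the linearity of $\wp$. That proof is a ternary circle-method argument. After writing $\mu(n_1n_2n_3)=\mu(n_1)\mu(n_2)\mu(n_3)$ times pairwise-coprimality indicators and stripping the latter by M\"obius inversion over common divisors --- whose contribution is controlled by applying the same argument to the resulting sub-sums and is routine --- one expands the equation $f(n_1)+g(n_2)+\wp(n_3)=M$ by the circle method to obtain
\[
\sfS_\mu({\mathbf D};M)=\int_0^1 U(\theta)\,V(\theta)\,W(\theta)\,e(-M\theta)\,d\theta+(\text{coprimality corrections}),
\]
where $U(\theta)=\sum_{n_1\in\cA}{\tt u}_{n_1}\mu(n_1)e(f(n_1)\theta)$, $V(\theta)$ is the analogous sum over $n_2\in\cB$, and $W(\theta)=\sum_{n_3\in\cN}\mu(n_3)e(\wp(n_3)\theta)$. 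The crucial observation is that the \emph{injectivity} of $f$ and $g$ makes the off-diagonal terms in $\int_0^1|U|^2$ and $\int_0^1|V|^2$ vanish, so that $\int_0^1|U(\theta)|^2\,d\theta\le|\cA|=A$ and $\int_0^1|V(\theta)|^2\,d\theta\le|\cB|=B$; by Cauchy--Schwarz,
\[
|\sfS_\mu({\mathbf D};M)|\le\Bigl(\sup_\theta|W(\theta)|\Bigr)\int_0^1|U(\theta)||V(\theta)|\,d\theta+\cdots\ll\sqrt{AB}\,\sup_\theta|W(\theta)|\le(A+B)\sup_\theta|W(\theta)|.
\]
Thus the entire problem reduces to a bound, \emph{uniform in $\theta$}, for the M\"obius sum $W(\theta)=\sum_{n_3\le N}\mu(n_3)e(\wp(n_3)\theta)$ twisted by a bracket-polynomial phase.

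When $\wp$ is linear, $W(\theta)$ is a Davenport-type sum: the bound $W(\theta)\ll_C N(\log N)^{-C}$ powers Theorem~\ref{thm:main1-long}, the no-Siegel-zero hypothesis upgrades it to $W(\theta)\ll N\er^{-c\sqrt{\log N}}$ via the explicit formula and the zero-free region, and a quasi-Riemann hypothesis for Dirichlet $L$-functions yields the power saving $W(\theta)\ll N^{c(\sigma_\bullet)+o(1)}$ through the standard balance of major-arc zero-density estimates against minor-arc Vinogradov-type bounds. For a general bracket polynomial $\wp$ of complexity $\delta$, the phase $n\mapsto e(\wp(n)\theta)$ is a polynomial nilsequence: by the Green--Tao structure theory for bracket polynomials it agrees, off an exceptional set of density $O_\delta(\eps)$ (absorbed using~\eqref{eq:triv-bd}), with $n\mapsto F\bigl(g(n)\Gamma\bigr)$ on a nilmanifold $G/\Gamma$ of dimension, step and complexity bounded in terms of $\delta$. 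The required input is then a \emph{quantitatively strong} M\"obius--nilsequence orthogonality estimate
\[
\sum_{n\le N}\mu(n)\,\overline{F}\bigl(g(n)\Gamma\bigr)\ll_\delta N\,\er^{-c\sqrt{\log N}}
\]
(with a power-saving version under quasi-RH), uniform over the relevant polynomial sequences $g(n)$ and vertical characters $F$.

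This is the main obstacle. The Green--Tao theorem on orthogonality of $\mu$ to nilsequences supplies such a bound only with a saving of shape $(\log N)^{-C}$; re-running its proof with Vinogradov--Korobov-type or zero-density inputs in place of the Siegel--Walfisz-type input recovers the $\er^{-c\sqrt{\log N}}$ (and power-saving) strength for the \emph{abelian} constituents --- hence for bracket polynomials whose phases collapse to linear exponentials modulo one, such as $\fl{\alpha n}$, handled via $\fl{\alpha n}=\alpha n-\{\alpha n\}$ and a Fourier expansion of $e(-\theta\{\alpha n\})$ --- but for genuinely higher-step nilsequences no pointwise cancellation of square-root-log or power strength is presently known, even conditionally, and obtaining it would be a substantial advance. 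A natural intermediate target, which does seem within reach, is the case $\wp\in\Z[X]$: there the phases $e(\theta P(n))$ with $P\in\R[X]$ are treated by Vinogradov's method on the minor arcs and Davenport's bound on the major arcs, a combination that does promote to $\er^{-c\sqrt{\log N}}$ (resp.\ power-saving) strength under the respective hypotheses; this case alone already covers, for example, the sum over $a,b,c\le m$ with $a+b\equiv c^2\pmod m$ from~\S\ref{sec:applications}. Granting the strong bound for $W(\theta)$, the estimates $(A+B)N\er^{-c\sqrt{\log N}}$ in case~$(i)$ and $(A+B)N^{c(\sigma_\bullet)+o(1)}$ in case~$(ii)$ follow at once, with constants depending only on $\delta$ (and on $C,\eps$, inherited from Theorem~\ref{thm:main1-long}); the hypothesis $\log N\ge(\log\max\{A,B\})^\eps$ ensures the saving in $N$ dominates the polynomial sizes of $A$ and $B$.
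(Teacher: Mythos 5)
The statement is a conjecture, so the paper does not prove it; the closest thing to the paper's own ``proof'' is Remark~\ref{rem:Conj1}, which identifies exactly the same missing ingredient that you do. Your plan agrees with the paper on the essential point: the linearity of $\wp$ enters only through the $\theta$-uniform Davenport-type bound for $\sum_{n}\mu(n)\e(\wp(n)\theta)$ restricted to residue classes, and to settle Conjecture~\ref{conj:one} it would suffice to establish analogues of~\eqref{eq:HajSmi} and~\eqref{eq:BakHar} for arbitrary bracket polynomials. That is precisely the content of Remark~\ref{rem:Conj1}; you have correctly located the obstruction and proposed a plausible route via the Green--Tao machinery combined with Vinogradov--Korobov or zero-density inputs.

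Two caveats. First, your sketch of the parent argument compresses the coprimality step in a way that loses the needed decay in the modulus: the paper detects $\gcd(n_1n_2,n_3)=1$ by inclusion--exclusion over $d$, splits at a threshold $D$, and for small $d$ relies on the injectivity-driven $L^2$ bound
\[
\int_0^1\biggl|\sum_{\substack{n\le A\\ \lcm[d_1,e]\mid n}}{\tt u}_n\,\mu(n)\e(\alpha f(n))\biggr|^2\,d\alpha\le\frac{A}{\lcm[d_1,e]},
\]
whose $1/\lcm[d_1,e]$ factor is what makes the $d$-sum converge; your simplified ``$\int_0^1|U|^2\le A$'' carries no such factor and would not close. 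This is routine to repair but should be stated. Second, your assertion that for ordinary $\wp\in\Z[X]$ the conditional $\er^{-c\sqrt{\log N}}$ (resp.\ power-saving) strength ``does promote'' via Vinogradov on the minor arcs and Davenport on the major arcs is plausible but does not appear to be established in the literature; the required uniformity over all real polynomial phases of bounded degree is not automatic from the linear case of~\cite{HajSmith,BaHa,Zhang}. Treat it as a target, as the paper itself does in Remark~\ref{rem:Conj1}, rather than as a granted step.
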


\subsection{Results over short intervals}
With only minor changes to the proof of Theorem~\ref{thm:main1-long},
we obtain the following result for short intervals.

\begin{thm}\label{thm:main1-short}
Let $C>0$, $\eps>0$, $N\ge 10$. Let ${\mathbf D}$ be
a collection of data~\eqref{eq:Data} such that
\[
\cA\defeq[1,A],\qquad\cB\defeq[1,B],\qquad\cN\defeq(N,N+H],
\]
where
\[
A,B,H\le N^{O_\delta(1)}\mand H\ge N^{5/8+\eps}.
\]
Assume further that $\wp$ is an ordinary polynomial in $\Z[X]$.
Then, the sums given by~\eqref{eq:SmuDM-defn}
satisfy the uniform bound
\[
 \sfS_\mu({\mathbf D};M)
\llsym{C,\eps,\delta}(A+B)H(\log N)^{-C}.
\]
\end{thm}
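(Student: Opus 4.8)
The plan is to follow the blueprint of Theorem~\ref{thm:main1-long} and make the modifications forced by the shorter range $\cN=(N,N+H]$. As in the long-interval case, I would begin by opening up $\mu(n_1n_2n_3)$ and reducing the trilinear object $\sfS_\mu({\mathbf D};M)$ to a question about the \emph{single} sum over $n_3$, after fixing $n_1\in\cA$, $n_2\in\cB$ and absorbing the constraint $f(n_1)+g(n_2)+\wp(n_3)=M$ into a detection of $\wp(n_3)=M-f(n_1)-g(n_2)=:m$. The presence of the weights ${\tt u}_{n_1}{\tt v}_{n_2}$, each bounded by $1$, together with the injectivity of $f$ and $g$, means that after summing trivially over whichever of $\cA,\cB$ is shorter we are left with roughly $\min\{A,B\}\le A+B$ copies of a sum of the shape $\sum_{n\in(N,N+H],\ \wp(n)=m}\mu(n)\,(\text{coprimality/quadratic-free factors})$, and the main content is a power-saving (here log-power-saving) bound for $\sum_{N<n\le N+H}\mu(n)\mathbf 1[\wp(n)=m]$, uniform in $m$, multiplied against the coprime-companion factors coming from the other two variables.

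The key analytic input changes: instead of the classical Davenport/Vinogradov estimate $\sum_{n\le N}\mu(n)e(\alpha n)\ll_C N(\log N)^{-C}$ used over the full interval, the short-interval version requires a Davenport-type bound for $\sum_{N<n\le N+H}\mu(n)e(\alpha n)$ valid for $H\ge N^{5/8+\eps}$, uniform in $\alpha$. Such a bound is available from the short-interval theory of exponential sums with $\mu$ (via Vaughan's identity combined with the Huxley/Heath-Brown exponent pair or zero-density input that governs $\sum_{n\le x}\mu(n)$ in intervals of length $x^{5/8+\eps}$); the exponent $5/8$ in the hypothesis is precisely the threshold at which this machinery delivers a nontrivial, indeed $(\log N)^{-C}$, saving. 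I would then feed this into the same ``bracket-polynomial-to-exponential-sum'' reduction used for Theorem~\ref{thm:main1-long}: using the assumption that $\wp$ is an \emph{ordinary} polynomial in $\Z[X]$, one detects $\wp(n)=m$ by $\int_0^1 e(\alpha(\wp(n)-m))\,d\alpha$, expands, and handles the resulting Weyl-type sums $\sum_{N<n\le N+H}\mu(n)e(\alpha\wp(n))$ by combining the short-interval Davenport bound on major arcs with a Weyl-differencing / Vinogradov mean-value treatment on minor arcs — again the constraint $H\ge N^{5/8+\eps}$ and $A,B,H\le N^{O_\delta(1)}$ guarantees the error terms are all absorbed. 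The polynomial degree (equivalently the complexity $\delta$) enters only in the number of differencing steps and hence in the implied constant, explaining the $\llsym{C,\eps,\delta}$ dependence.

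The main obstacle I anticipate is not the combinatorial packaging but the uniformity of the short-interval exponential-sum estimate: one needs $\sum_{N<n\le N+H}\mu(n)e(\alpha\wp(n))\ll_{C,\eps,\delta} H(\log N)^{-C}$ \emph{uniformly} in $\alpha\in[0,1)$ and in $m$, and for a polynomial $\wp$ of degree $d$ this forces a genuinely two-parameter analysis — the interval is short \emph{and} the phase is a higher-degree polynomial. On the major arcs (rational $\alpha$ with small denominator) the saving must come from cancellation in $\mu$ over $(N,N+H]$ inside arithmetic progressions, which is exactly where the hypothesis $H\ge N^{5/8+\eps}$ is spent (this is the short-interval analogue of the prime-number-theorem-type inputs used in Theorem~\ref{thm:main1-long}, and is the reason a Siegel-zero-free or zero-density hypothesis lurks in the background, though here we only claim the $(\log N)^{-C}$ version). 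On the minor arcs, Weyl differencing a polynomial of degree $d$ over a window of length $H$ produces after $d-1$ steps a linear exponential sum over a window of length $\sim H/2^{d-1}$ with a ``derivative'' frequency, and one must check this still satisfies $H/2^{d-1}\gg N^{1/2+\eps'}$-type lower bounds so that the classical bilinear estimates for $\mu$ apply; since $H\ge N^{5/8+\eps}$ and $d=O(\delta)$ is fixed, this is satisfied, but it is the step where the numerical threshold $5/8$ and the need for a fixed polynomial (rather than a general bracket polynomial, whose equidistribution over short windows is far more delicate) are genuinely used.
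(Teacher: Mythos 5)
Your high-level plan---detect the constraint $f(n_1)+g(n_2)+\wp(n_3)=M$ via $\int_0^1\e(\alpha(\cdots))\,d\alpha$, factor the resulting integrand into a bilinear piece over $n_1,n_2$ and a univariate exponential sum $\sum_{N<n_3\le N+H}\mu(n_3)\e(\alpha\wp(n_3))$, and then port Theorem~\ref{thm:main1-long}'s argument---does match the paper's outline. But there are two substantive gaps.

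First and most seriously, your proposed source of the key analytic input is wrong. You suggest obtaining $\sum_{N<n\le N+H}\mu(n)\e(\alpha\wp(n))\ll_{C,\eps,\delta}H(\log N)^{-C}$ for $H\ge N^{5/8+\eps}$ by ``Vaughan's identity + Huxley/Heath-Brown exponent pairs'' on major arcs and ``Weyl-differencing / Vinogradov mean-value'' on minor arcs, treating the Zhan-type \emph{linear}-phase result as essentially classical and then differencing down to linear phases. This does not work: Weyl differencing does not contract the summation window to $\sim H/2^{d-1}$, and more importantly each differencing step degrades the exponent of saving, so the $5/8+\eps$ threshold would \emph{not} survive to arbitrary fixed degree $\delta$. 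The uniform $(\log N)^{-C}$ saving for arbitrary polynomial phases in intervals of length $N^{5/8+\eps}$ is precisely the deep result of Matom\"aki, Shao, Tao and Ter\"av\"ainen (cited in the paper as~\eqref{eq:MSTT} / \cite[Cor.~1.3\,$(i)$]{MSTT}), which uses their higher-uniformity machinery, not a classical circle-method/Weyl argument. The paper uses this as a black box (packaged as Lemma~\ref{lem:Davenport  Bound-Short}); your proposal effectively claims to reprove it by elementary means, which is a genuine gap. Your proof also needs the progression version (the $n\equiv a\bmod d$ restriction), which the paper obtains by the same orthogonality trick as in Lemma~\ref{lem:Davenportish Bound}; you never mention why an AP-restricted form is needed.

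Second, you gloss over how the coprimality coupling $\gcd(n_1n_2,n_3)=1$ (forced by $\mu(n_1n_2n_3)\ne 0$) is decoupled. The paper's mechanism is the inclusion--exclusion $S=\sum_{d}\mu(d)I(d)$ over common divisors $d$ of $n_1n_2$ and $n_3$, with a split at a threshold $D$: the small-$d$ range is handled by the discorrelation bound and a Cauchy--Schwarz estimate on the bilinear $n_1,n_2$ sum restricted by $d\mid n_1n_2$ (producing $\sqrt{AB}\,\tau(d)^2/\sqrt d$), while the large-$d$ tail is bounded by a direct count using $d\mid n_3$ and $d\mid n_1n_2$. Your ``multiplied against the coprime-companion factors coming from the other two variables'' and ``summing trivially over whichever of $\cA,\cB$ is shorter'' do not amount to this; as stated, fixing $n_1,n_2$ and summing over $n_3$ with $\wp(n_3)=m$ fixed leaves only $O_\delta(1)$ terms, so no cancellation is available there, and the trivial route gives nothing better than~\eqref{eq:triv-bd}. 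Relatedly, you omit the final bookkeeping: after balancing $D$ the paper obtains $(A^{3/4}B^{1/4}+A^{1/4}B^{3/4})H(\log N)^{-2C}+(A+B)N^{1/2}\log N$ and needs both the hypothesis $H\ge N^{5/8+\eps}$ \emph{and} a case split on whether $A,B$ are within a $(\log N)^C$ factor of each other to recover the stated $(A+B)H(\log N)^{-C}$. These are not cosmetic omissions; without the $\sum_d$ decomposition and without the correct analytic input, the proposal does not close.
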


In the statement of Theorem~\ref{thm:main1-short}, the complexity
$\delta$ can be replaced by the degree $d$ of the polynomial
$\wp$ since clearly $\delta \ll d$.

We propose the following analogue of Conjecture~\ref{conj:one}.

\begin{conjecture}\label{conj:two}
The bounds of Theorem~\ref{thm:main1-short} hold for any
bracket polynomial $\wp:\Z\to\Z$ of complexity $\delta$
with implied constants that depend only on $C,\eps,\delta$.
In the same generality, bounds similar to those of
Theorem~\ref{thm:noSiegel} hold $($under the same conditions
on the zeros of Dirichlet $L$-functions$)$ with constants $c$ in~\eqref{eq:uni-one}
and $c(\sigma_\bullet)$ in~\eqref{eq:uni-two}
that depend only on $\delta$.
\end{conjecture}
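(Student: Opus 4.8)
\medskip
\noindent\emph{Towards Conjecture~\ref{conj:two}.}
The plan is to run the argument behind Theorems~\ref{thm:main1-long} and~\ref{thm:main1-short} unchanged and to isolate the single analytic input that depends on the nature of $\wp$. Detecting the relation $f(n_1)+g(n_2)+\wp(n_3)=M$ through
\[
\mathbf{1}\bigl[f(n_1)+g(n_2)+\wp(n_3)=M\bigr]=\int_0^1\er\bigl(\alpha\bigl(f(n_1)+g(n_2)+\wp(n_3)-M\bigr)\bigr)\,d\alpha
\]
(with $\er(t)\defeq\exp(2\pi i t)$), and detaching $\mu(n_1n_2n_3)=\mu(n_1)\mu(n_2)\mu(n_3)$ by the standard coprimality sieve --- which, up to a trivially bounded remainder, introduces only bounded auxiliary moduli $d\le(\log N)^{O_C(1)}$ and precomposes $f$, $g$, $\wp$ with dilations $n\mapsto dn$ --- one is reduced to controlling integrals $\int_0^1 U(\alpha)V(\alpha)T(\alpha)\,\er(-\alpha M)\,d\alpha$, where $U(\alpha)\defeq\sum_{n_1\in\cA}{\tt u}_{n_1}\mu(n_1)\er(\alpha f(n_1))$, $V(\alpha)\defeq\sum_{n_2\in\cB}{\tt v}_{n_2}\mu(n_2)\er(\alpha g(n_2))$, and $T(\alpha)\defeq\sum_{n_3\in\cN}\mu(n_3)\er(\alpha\wp(n_3))$. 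Since $f$ and $g$ are injective and integer-valued, Parseval gives $\int_0^1|U(\alpha)|^2\,d\alpha\le A$ and $\int_0^1|V(\alpha)|^2\,d\alpha\le B$, so by Cauchy--Schwarz
\[
\bigl|\sfS_\mu({\mathbf D};M)\bigr|\ll(AB)^{1/2}\sup_{\alpha\in[0,1]}|T(\alpha)|\le(A+B)\sup_{\alpha\in[0,1]}|T(\alpha)|.
\]
Everything thus reduces to a uniform bound for $|T(\alpha)|$. The gain is that if $\wp$ is a bracket polynomial of complexity $\delta$, then $\alpha\wp$ is again a bracket polynomial of complexity at most $\delta+1$, uniformly in $\alpha\in[0,1]$, while the bracket polynomials $\wp(dn)$ arising from the sieve have complexity $O(\delta)$; hence it suffices to bound $\bigl|\sum_{n\in\cN}\mu(n)\er(\psi(n))\bigr|$ uniformly over all bracket polynomials $\psi$ of complexity $O(\delta)$.

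\medskip
Over a long interval $\cN=[1,N]$ this is precisely the orthogonality of $\mu$ to bracket-polynomial phases from~\cite{GreenTao} (with saving $(\log N)^{-C}$), together with its conjectural conditional refinements under hypotheses on the zeros of Dirichlet $L$-functions; these already give Theorem~\ref{thm:main1-long} for bracket $\wp$ and would give Conjecture~\ref{conj:one}. What is genuinely missing for Conjecture~\ref{conj:two} is the \emph{short-interval} version: for every bracket polynomial $\psi$ of complexity $\delta'$ and every $H\ge N^{5/8+\eps}$,
\[
\sum_{N<n\le N+H}\mu(n)\,\er(\psi(n))\llsym{C,\eps,\delta'}H(\log N)^{-C}.
\]
I would establish this along the route that already underlies Theorem~\ref{thm:main1-short} for ordinary polynomials: Vaughan's identity splits the sum into Type~I pieces $\sum_{d\le D}|a_d|\bigl|\sum_{m}\er(\psi(dm))\bigr|$, with $m$ over an interval of length comparable to $H/d$, and Type~II bilinear pieces $\sum_{D<d\le\sqrt{N}}\sum_m a_d b_m\,\er(\psi(dm))$; one then replaces the Weyl and van der Corput estimates for short polynomial sums by their bracket-polynomial analogues --- a version of the Green--Tao quantitative equidistribution theorem for polynomial sequences on nilmanifolds, localized to intervals as short as $N^{5/8+\eps}$, for the Type~I pieces, and a short-interval Weyl-differencing inequality for bracket polynomials (Cauchy--Schwarz in $d$ followed by counting the near-solutions of $\psi(dm)-\psi(dm')=\psi(d'm)-\psi(d'm')$ in the relevant ranges) for the Type~II pieces. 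The exponent $5/8$ should be exactly the threshold at which these short-sum estimates still beat $(\log N)^{-C}$.

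\medskip
The principal obstacle is the short-interval localization itself. The Green--Tao machinery proves equidistribution by an inductive factorization of a polynomial nilsequence into a product of a slowly varying factor, a genuinely equidistributed factor, and a rational factor, and each step of this induction loses quantitative control; carrying the window $H=N^{5/8+\eps}$ through the whole recursion with admissible losses --- keeping track of the interplay between $H$, the complexity $\delta$, and the dimension and filtration data of the nilmanifolds that occur --- is where the real work lies, and amounts to redoing the relevant parts of~\cite{GreenTao} with short-interval Weyl sums in the sense of Vinogradov in place of the full-range ones. For the second half of the conjecture --- the conditional power-type bounds $N^{c(\sigma_\bullet)+o(1)}$ from part~(ii) of Theorem~\ref{thm:noSiegel} for bracket $\wp$ --- the difficulty is more serious still: the orthogonality of $\mu$ to nilsequences is at present known only with a $(\log N)^{-C}$ saving, so upgrading it to a power saving governed by a zero-free region for Dirichlet $L$-functions would require a zero-density-type estimate adapted to the nilsequence setting, which lies beyond current methods. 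This is why the statement is recorded here only as a conjecture.
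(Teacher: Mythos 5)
The statement you were asked to address is a \emph{conjecture}; the paper contains no proof, offering only Remark~\ref{rem:Conj1} as guidance. You correctly recognize this and, rather than presenting a spurious argument, carry out the right reduction: the sole ingredient in the proofs of Theorems~\ref{thm:main1-long}, \ref{thm:noSiegel}, and~\ref{thm:main1-short} that is sensitive to the nature of $\wp$ is a uniform discorrelation estimate for $\sum_{n\in\cN}\mu(n)\e(\alpha\wp(n))$ along arithmetic progressions---equivalently, for bracket polynomials of complexity $O(\delta)$, since $\alpha\wp(n)+jn/d$ has complexity $O(\delta)$---while the coprimality sieve, the Cauchy--Schwarz/Parseval treatment of the $n_1,n_2$ sums, and the small/large-$d$ dichotomy are indifferent to $\wp$. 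Your identification of the two missing ingredients, namely a short-interval bracket-polynomial analogue of the Green--Tao bound~\eqref{eq:GT-long-gen} with $(\log N)^{-C}$ savings, and conditional power-saving analogues of~\eqref{eq:HajSmi} and~\eqref{eq:BakHar} for bracket polynomials in short intervals, agrees with what Remark~\ref{rem:Conj1} says for Conjecture~\ref{conj:one} and extends it naturally to the short-interval setting of Conjecture~\ref{conj:two}. Your assessment of why these are hard (localizing the nilsequence equidistribution machinery to windows of length $N^{5/8+\eps}$; the absence of any power-saving nilsequence discorrelation even over full intervals) is sensible, and your conclusion that the statement remains a conjecture is the correct one. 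One small inaccuracy worth flagging: the sieve moduli $d$ in the paper run all the way to $N$ and are split at $D\asymp(\log N)^{2C+2}$, with the tail $d>D$ handled by a trivial count contributing $(A+B)N D^{-1/2}\log N$; describing this as ``a trivially bounded remainder'' understates its role, since after balancing it is exactly of the same size as the small-$d$ contribution and dictates the choice of $D$ and hence the final strength of the bound.
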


{\large\section{Preliminaries}}

\subsection{Exponential sums with the M{\"o}bius function}
\label{sec:expsumsM}

Our work relies on bounds for sums $\sum\mu(n)\e(p(n))$
with bracket polynomials $p:\Z\to\R$, where
\[
\e(u)\defeq\exp(2 \pi i u)\qquad(u\in\R).
\]
The first result in this direction was obtained
in 1937 by  Davenport~\cite{Dav}
(see also~\cite[Thm.~13.10]{IwKow}), who showed that
\be\label{eq:Dav}
\sum_{n\le N}\mu(n)\e(\alpha n)\llsym{C} N(\log N)^{-C}
\ee
holds uniformly for $\alpha\in\R$. 
Note that the constant implied by~\eqref{eq:Dav} 
cannot be computed effectively
 due to the possible existence of Siegel zeros.
On the other hand, the independence of $\alpha$ in the
bound~\eqref{eq:Dav} is an attractive feature that plays an
essential role in this paper.

Davenport's theorem~\eqref{eq:Dav} has been extended in numerous ways.
The extension to bracket polynomials is due to
Green and Tao~\cite{GreenTao}; it follows as a consequence of
the main theorem~\cite[Thm.~1.1]{GreenTao} used in their
resolution of the M\"obius and Nilsequence conjecture, i.e.,
that the M\"obius function is strongly orthogonal to nilsequences.
More precisely we recall the following striking result,
which is due to Green and Tao~\cite[Thm.~5.2]{GreenTao}:
For any bracket polynomial
$p:\Z\to\R$ of complexity~$\delta$
and any \emph{Lipshitz function} $\Psi:[0,1]\to[-1,1]$, we have
\be\label{eq:GT-thm}
\sum_{n\le N}\mu(n)\Psi(\{p(n)\})\llsym{\delta,\Psi,C}
N(\log N)^{-C}.
\ee
We remark that the implied constant is ineffective.

The Green-Tao bound~\eqref{eq:GT-thm} clearly implies that
\be\label{eq:GT-long-gen}
\sum_{n\le N}\mu(n)\e(p(n))\llsym{\delta,C}N(\log N)^{-C}.
\ee
We use a slight variant of this bound in our proof of
Theorem~\ref{thm:main1-long}.

\begin{lemma}\label{lem:Davenportish Bound}
Let $C>0$, $N\ge 10$. For any $d\in\N$, $a\in\Z$,
and any bracket polynomial $p:\Z\to\R$ of complexity~$\delta$,
the following bound holds:
\be\label{eq:GT-long-APs}
\ssum{n\le N\\n\equiv a\bmod d}\mu(n)\e(p(n))\llsym{\delta,C}
N(\log N)^{-C}.
\ee
\end{lemma}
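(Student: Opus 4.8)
The plan is to deduce the bound~\eqref{eq:GT-long-APs} from the Green--Tao estimate~\eqref{eq:GT-long-gen} by detecting the congruence condition with additive characters. First I would replace $a$ by its least nonnegative residue modulo $d$, which leaves the sum unchanged, so that $0\le a<d$. Inserting the identity
\[
\mathbf 1[n\equiv a\bmod d]=\frac1d\sum_{b=0}^{d-1}\e\left(\frac{b(n-a)}{d}\right)
\]
into the left-hand side of~\eqref{eq:GT-long-APs} and interchanging the two finite summations, that sum becomes
\[
\frac1d\sum_{b=0}^{d-1}\e\left(-\frac{ab}{d}\right)\sum_{n\le N}\mu(n)\,\e\left(p(n)+\frac{b}{d}\,n\right).
\]
The crucial point is that, for each $b$, the function $p_b(n)\defeq p(n)+\tfrac{b}{d}\,n$ is again a bracket polynomial whose complexity is at most $\delta+2$: the ratio $b/d$ is a single element of the scalar field $\R$ (and scalars carry no cost in the complexity count), so forming $\tfrac{b}{d}\,n$ costs one multiplication and adding the result to $p$ costs one more operation. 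Note that this bound $\delta+2$ is uniform in both $b$ and $d$.

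Next I would apply~\eqref{eq:GT-long-gen} to each inner sum $\sum_{n\le N}\mu(n)\,\e(p_b(n))$. Since the implied constant there depends only on $C$ and on the complexity --- which is now $\le\delta+2$, hence a function of $\delta$ alone --- we obtain
\[
\sum_{n\le N}\mu(n)\,\e(p_b(n))\llsym{\delta,C}N(\log N)^{-C},
\]
with a constant that does not depend on $b$ or $d$. Summing the $d$ terms, each of size $O_{\delta,C}(N(\log N)^{-C})$, and cancelling against the prefactor $1/d$, we arrive at~\eqref{eq:GT-long-APs}.

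The step I would be most careful about is precisely this uniformity in $d$: the whole argument hinges on the complexity of $p_b$ staying bounded by $\delta+2$ as $d\to\infty$. This is exactly what the Green--Tao notion of bracket polynomial affords us, since it permits arbitrary real constants (here $b/d$) at no complexity cost; if one instead tried to realize $b/d$ through integer operations, the complexity --- and with it the implied constant in~\eqref{eq:GT-long-gen} --- would blow up with $d$, and the telescoping of the $d$ terms against $1/d$ would fail. Every other ingredient is routine: the character identity is exact, the interchange of summations involves only finite sums, and the reduction of $a$ modulo $d$ is harmless.
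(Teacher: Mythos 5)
Your proof is correct and follows essentially the same route as the paper's: detect the congruence with additive characters, interchange the finite sums, observe that each shifted phase $p(n)+bn/d$ is still a bracket polynomial of complexity $O(\delta)$ uniformly in $b$ and $d$, and then apply the Green--Tao bound~\eqref{eq:GT-long-gen} to each of the $d$ inner sums so that the $1/d$ prefactor cancels the count.
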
 

\begin{proof}
Using orthogonality to detect the congruence condition, we have
\dalign{
\ssum{n\le N\\n\equiv a\bmod d}\mu(n)\e(p(n))
&=\sum_{n\le N}\mu(n)\e(p(n))\cdot\frac{1}{d}
\sum_{j\bmod d}\e(j(n-a)/d)\\
&=\frac{1}{d}\sum_{j\bmod d}\e(-ja/d)
\sum_{n\le N}\mu(n)\e(p(n)+jn/d).
}
Since each bracket polynomial $p(n)+jn/d$ has
complexity $O(\delta)$, every inner sum is 
$O_{\delta,C}(N(\log N)^{-C})$ by~\eqref{eq:GT-long-gen},
and the result follows.
\end{proof}

\begin{remark} 
We stress that the implied constant
in~\eqref{eq:GT-long-APs} is
independent of the arithmetic progression $a\bmod d$.
\end{remark}

For linear polynomials in $\Z[X]$, sharper discorrelation
bounds than~\eqref{eq:Dav} are available provided that 
the zeros of all Dirichlet $L$-functions are
sufficiently well-behaved. For example, Hajela and
Smith~\cite{HajSmith} have shown that the bound
\be\label{eq:HajSmi}
\sup\limits_{\alpha\in\R}
\biggl|\sum_{n\le N}\mu(n)\e(\alpha n)\biggr|\ll N\er^{-c\sqrt{\log N}}
\ee
holds (with some absolute effective constant $c$)
provided that Siegel zeros do not exist. Moreover,
they established stronger results (with a power savings in~$N$)
under the stronger assumption that every $L$-function $L(s,\chi)$
is nonzero in a half-plane $\sigma\ge\sigma_\bullet$
with some $\sigma_\bullet<1$. These
results were improved by Baker and Harman~\cite{BaHa}, 
and further by Zhang~\cite{Zhang}; hence, it is known
(under the same hypothesis) that
\be\label{eq:BakHar}
\sup\limits_{\alpha\in\R}
\biggl|\sum_{n\le N}\mu(n)\e(\alpha n)\biggr|\le N^{b(\sigma_\bullet)+o(1)}\qquad(N\to\infty),
\ee
where
\be\label{eq:bsigdefn}
b(\sigma_\bullet)\defeq\begin{cases}
(8\sigma_\bullet-7\sigma_\bullet^2)/(4-2\sigma_\bullet)
&\quad\hbox{if $1/2\le\sigma_\bullet\le 4/7$,}\\
4/5
&\quad\hbox{if $4/7\le\sigma_\bullet\le 3/5$,}\\
(\sigma_\bullet+1)/2
&\quad\hbox{if $3/5\le\sigma_\bullet<1$.}
\end{cases}
\ee
In particular, under the GRH, we have
\[
\sup\limits_{\alpha\in\R}
\biggl|\sum_{n\le N}\mu(n)\e(\alpha n)\biggr|\le N^{3/4+o(1)}\qquad(N\to\infty)
\]
(we remark that in this case the results of Baker and Harman~\cite{BaHa}, 
and of  Zhang~\cite{Zhang} coincide).

Arguing as in the proof of Lemma~\ref{lem:Davenportish Bound},
using~\eqref{eq:HajSmi} or~\eqref{eq:BakHar} in place of~\eqref{eq:GT-long-gen}, 
we have the following two estimates.

\begin{lemma}
\label{lem:disc-noSiegel}
Suppose that Siegel zeros do not exist for Dirichlet $L$-functions.
Then, for $N\ge 10$, $d\in\N$, and $a\in\Z$, we have
\[
\sup\limits_{\alpha\in\R}
\biggl|\ssum{n\le N\\n\equiv a\bmod d}\mu(n)\e(\alpha n)\biggr|
\ll N\er^{-c\sqrt{\log N}}
\]
for some effectively computable constant $c>0$.\end{lemma}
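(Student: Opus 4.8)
The plan is to repeat the proof of Lemma~\ref{lem:Davenportish Bound} almost verbatim, feeding in the Hajela--Smith bound~\eqref{eq:HajSmi} instead of~\eqref{eq:GT-long-gen}. The single feature of~\eqref{eq:HajSmi} that makes this work is its \emph{uniformity} over all real frequencies, which must survive the reduction.

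First I would detect the congruence $n\equiv a\bmod d$ by orthogonality of the additive characters modulo $d$, writing
\[
\ssum{n\le N\\n\equiv a\bmod d}\mu(n)\e(\alpha n)
=\frac1d\sum_{j\bmod d}\e(-ja/d)\sum_{n\le N}\mu(n)\e\bigl((\alpha+j/d)n\bigr).
\]
Taking absolute values and using $|\e(-ja/d)|\le1$, the left-hand side is at most $\frac1d\sum_{j\bmod d}\bigl|\sum_{n\le N}\mu(n)\e((\alpha+j/d)n)\bigr|$. For each $j$ the inner sum is a linear M\"obius exponential sum at the real frequency $\alpha+j/d$, so---under the hypothesis that Siegel zeros do not exist---\eqref{eq:HajSmi} bounds it by $\ll N\er^{-c\sqrt{\log N}}$ with an absolute, effectively computable constant $c>0$ that is independent of $j$, $d$, and $\alpha$. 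Summing over the $d$ residues $j$ and dividing by $d$, the number of terms is exactly cancelled by the factor $1/d$, leaving $\ll N\er^{-c\sqrt{\log N}}$ uniformly in $a$, $d$, and $\alpha$; taking the supremum over $\alpha\in\R$ completes the argument.

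The proof presents no genuine obstacle; the only point to watch is that the estimate remains uniform in the modulus $d$, and this holds automatically precisely because~\eqref{eq:HajSmi} is uniform over \emph{all} frequencies, so each of the $d$ shifted frequencies $\alpha+j/d$ contributes the same amount and their number is absorbed by the normalization. Effectivity of the constant $c$ is likewise inherited directly from~\eqref{eq:HajSmi}. (The same reasoning with~\eqref{eq:BakHar} in place of~\eqref{eq:HajSmi} yields the power-savings analogue recorded in the companion lemma.)
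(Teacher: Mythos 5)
Your proof is correct and follows exactly the approach the paper indicates: replicate the orthogonality argument from Lemma~\ref{lem:Davenportish Bound}, replacing the Green--Tao bound~\eqref{eq:GT-long-gen} by the uniform-in-frequency Hajela--Smith estimate~\eqref{eq:HajSmi}, with the factor $1/d$ absorbing the $d$ residue classes. No discrepancies.
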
 

\begin{lemma}
\label{lem:disc-zero strip}
Suppose that every Dirichlet $L$-function $L(s,\chi)$
is nonzero in the half-plane $\sigma\ge\sigma_\bullet$
for some $\sigma_\bullet<1$.
Then, for $N\ge 10$, $d\in\N$, and $a\in\Z$, we have
\[
\sup\limits_{\alpha\in\R}
\biggl|\ssum{n\le N\\n\equiv a\bmod d}\mu(n)\e(\alpha n)\biggr|
\le N^{b(\sigma_\bullet)+o(1)}\qquad(N\to\infty),
\]
where $b(\sigma_\bullet)$ is given by~\eqref{eq:bsigdefn}.
\end{lemma}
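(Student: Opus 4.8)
The plan is to mimic the proof of Lemma~\ref{lem:Davenportish Bound} almost verbatim, simply replacing the Green--Tao input~\eqref{eq:GT-long-gen} by the conditional bound~\eqref{eq:BakHar} of Baker--Harman and Zhang. First I would use orthogonality of the additive characters modulo $d$ to detect the congruence $n\equiv a\bmod d$, writing
\[
\ssum{n\le N\\n\equiv a\bmod d}\mu(n)\e(\alpha n)
=\frac1d\sum_{j\bmod d}\e(-ja/d)\sum_{n\le N}\mu(n)\e\bigl((\alpha+j/d)n\bigr).
\]
Each inner sum has the shape $\sum_{n\le N}\mu(n)\e(\beta n)$ with $\beta=\alpha+j/d\in\R$, and is therefore bounded in absolute value by $\sup_{\beta\in\R}\bigl|\sum_{n\le N}\mu(n)\e(\beta n)\bigr|$.

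Next I would invoke~\eqref{eq:BakHar}: under the hypothesis that every Dirichlet $L$-function $L(s,\chi)$ is zero-free in the half-plane $\sigma\ge\sigma_\bullet$, this supremum is at most $N^{b(\sigma_\bullet)+o(1)}$ as $N\to\infty$, with $b(\sigma_\bullet)$ given by~\eqref{eq:bsigdefn}. Since there are exactly $d$ values of $j$, each carrying the weight $1/d$, the triangle inequality yields a bound of $N^{b(\sigma_\bullet)+o(1)}$, uniformly in $\alpha\in\R$, $a\in\Z$, and $d\in\N$; taking the supremum over $\alpha$ then finishes the argument. Exactly the same scheme, with~\eqref{eq:HajSmi} used in place of~\eqref{eq:BakHar}, proves Lemma~\ref{lem:disc-noSiegel}.

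There is essentially no obstacle here, since the entire arithmetic content is already packaged inside~\eqref{eq:BakHar}; this lemma merely transfers that estimate across arithmetic progressions at no cost, because the normalising factor $1/d$ precisely cancels the $d$ characters introduced by orthogonality. The only point requiring (minor) care is that the $o(1)$ in the exponent of~\eqref{eq:BakHar} be uniform in the real argument $\beta$---which it is, as~\eqref{eq:BakHar} is stated as a bound for the supremum over all $\beta\in\R$---so that summing $d$ copies and dividing by $d$ does not degrade the exponent. If one wishes to be completely safe about large moduli, the case $d>N$ can be disposed of at once, since then at most one $n\le N$ satisfies $n\equiv a\bmod d$ and the sum is trivially $O(1)=N^{o(1)}$.
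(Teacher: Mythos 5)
Your proof is correct and coincides with the paper's own argument: the authors explicitly state that Lemmas~\ref{lem:disc-noSiegel} and~\ref{lem:disc-zero strip} follow by ``arguing as in the proof of Lemma~\ref{lem:Davenportish Bound}, using~\eqref{eq:HajSmi} or~\eqref{eq:BakHar} in place of~\eqref{eq:GT-long-gen}.'' You have reproduced exactly that orthogonality-and-supremum argument, including the key observation that the $1/d$ normalisation offsets the $d$ additive characters so that the exponent is preserved uniformly in $d$ and $a$.
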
 

\begin{remark}\label{rem:Conj1}
In Theorem~\ref{thm:noSiegel}
the hypothesis that $\wp$ is a linear polynomial in $\Z[X]$ is
only needed in order to apply Lemmas~\ref{lem:disc-noSiegel}
and~\ref{lem:disc-zero strip} in their proofs.
To prove Conjecture~\ref{conj:one}, it suffices to
establish analogs of~\eqref{eq:HajSmi} and~\eqref{eq:BakHar}
for arbitrary bracket polynomials $\wp:\Z\to\Z$. It is very plausible that 
the ideas and results of  Green and Tao~\cite{GreenTao} can be used
to prove these results.
\end{remark}

Zhan~\cite{Zhan} studied the discorrelation between $\mu$ and real additive characters over short intervals. He showed that
\be\label{eq:Zhan}
\sup\limits_{\alpha\in\R}
\biggl|\sum_{N<n\le N+H}\mu(n)\e(\alpha n)\biggr|\llsym{\eps, A}
H(\log N)^{-C}
\ee
holds in the range $N^{5/8+\eps}\le H\le N$. More recently,
the discorrelation of $\mu$ with arbitrary polynomial phases
over short intervals has been studied by Matom\"aki, 
Shao, Tao, and Ter\"av\"ainen~\cite{MSTT} as part of their
 program to explore correlations of arithmetic 
functions $f:\N\to\C$ with arbitrary nilsequences in short intervals.
In particular, their result~\cite[Cor.~1.3\,$(i)$]{MSTT} implies that
if $N^{5/8+\eps}\le H\le N^{O_\delta(1)}$, and $P:\Z\to\R$ is any polynomial
of degree $\delta\ge 0$, then for all $A>0$, one has
\be\label{eq:MSTT}
\sum_{N<n\le N+H}\mu(n)\e(P(n))\llsym{C,\eps,\delta}
H(\log N)^{-C}.
\ee
Note that in the formulation of~\cite[Cor.~1.3\,$(i)$]{MSTT},
the range $N^{5/8+\eps}\le H\le N^{1-\eps}$ is assumed for convenience,
but the result is easily extended to cover the range
$N^{1-\eps}\le H\le N^{O_\delta(1)}$ by splitting long sums into
shorter ones; see Remark~1.2 in~\cite{MSTT}. Since the
upper bound~\eqref{eq:MSTT} is the same for all polynomials of a fixed
degree $\delta$, the bound~\eqref{eq:Zhan} of Zhan~\cite{Zhan} follows
(in the same extended range)
by taking $P(x)\defeq\alpha x$ 
and letting $\alpha$ vary over~$\R$.
We remark that, for smaller $H$ above $N^{3/5+\eps}$,~\cite[Cor.~1.3\,$(iii)$]{MSTT} 
gives a nontrivial bound;
however, the savings over the trivial bound is only $(\log N)^{1/3}$,
which is insufficient for our purposes.

Using~\eqref{eq:MSTT}  and adapting 
the proof of Lemma~\ref{lem:Davenportish Bound}, 
we have the following result on discorrelation of $\mu$ with polynomial phases in 
short arithmetic progressions.

\begin{lemma}
\label{lem:Davenport  Bound-Short}
Let $N\ge 2$, and let
$N^{5/8+\eps}\le H\le N^{O_\delta(1)}$ for some
$\eps>0$. Let $\wp:\Z\to\R$ be an s an ordinary polynomial in $\Z[X]$ of degree 
$\delta\ge 0$.
Then, for all $A>0$, $d\in\N$, $a\in\Z$, the following bound holds:
\[
\ssum{N<n\le N+H\\n\equiv a\bmod d}\mu(n)\e(P(n))\llsym{C,\eps,\delta}
H(\log N)^{-C}.
\]
\end{lemma}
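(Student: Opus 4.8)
The plan is to mimic the proof of Lemma~\ref{lem:Davenportish Bound} verbatim, substituting the short-interval bound~\eqref{eq:MSTT} for~\eqref{eq:GT-long-gen}. First I would detect the congruence $n\equiv a\bmod d$ using the orthogonality of additive characters modulo $d$, writing
\[
\ssum{N<n\le N+H\\n\equiv a\bmod d}\mu(n)\e(P(n))
=\frac1d\sum_{j\bmod d}\e(-ja/d)\sum_{N<n\le N+H}\mu(n)\,\e\!\left(P(n)+\tfrac{jn}{d}\right).
\]
For each $j$, the phase $Q_j(n)\defeq P(n)+jn/d$ is again an ordinary polynomial in $\R[X]$ of the same degree $\delta$ as $P$ (the linear term is merely shifted, and if $\delta=0$ it becomes linear, which is still a polynomial of degree $\le 1$, covered by~\eqref{eq:MSTT}). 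Hence~\eqref{eq:MSTT} applies to each inner sum and gives $\sum_{N<n\le N+H}\mu(n)\e(Q_j(n))\llsym{C,\eps,\delta}H(\log N)^{-C}$, with an implied constant that does not depend on $j$, $a$, or $d$ because the bound in~\eqref{eq:MSTT} is uniform over all polynomials of a fixed degree. Summing the $d$ terms and using $\frac1d\sum_{j\bmod d}|\e(-ja/d)|=1$ absorbs the averaging factor and yields the claimed bound. One should note the harmless typographical slips in the statement (``an s an ordinary polynomial'', and the mismatch between $\wp$ in the hypothesis and $P$ in the displayed sum): I would simply phrase the proof in terms of a single polynomial, say $P=\wp$, of degree $\delta$.

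The only genuine point requiring a word of care is the degree bookkeeping under the shift. Adding $jn/d$ changes $P$ by a degree-$\le 1$ term, so $\deg Q_j=\max\{\deg P,1\}$; in all cases $\deg Q_j\le\max\{\delta,1\}$, which is still $O_\delta(1)$, so the implied constant in~\eqref{eq:MSTT} depends only on $C,\eps,\delta$ as required, and in particular is independent of the progression $a\bmod d$. I would also remark, echoing the remark after Lemma~\ref{lem:Davenportish Bound}, that this uniformity in $a\bmod d$ is exactly what makes the lemma useful downstream. There is no serious obstacle here — the work was already done in~\cite{MSTT}; the content of the lemma is just the elementary character-sum manipulation, and I expect the ``hard part'' to be nothing more than stating it cleanly so that the degree and uniformity hypotheses of~\eqref{eq:MSTT} are manifestly met.

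Concretely, the write-up would read:

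\begin{proof}
We may assume $P\defeq\wp\in\Z[X]$ has degree $\delta\ge 0$. Detecting the congruence $n\equiv a\bmod d$ via orthogonality of the additive characters modulo $d$, we have
\dalign{
\ssum{N<n\le N+H\\n\equiv a\bmod d}\mu(n)\e(P(n))
&=\sum_{N<n\le N+H}\mu(n)\e(P(n))\cdot\frac1d\sum_{j\bmod d}\e\!\left(j(n-a)/d\right)\\
&=\frac1d\sum_{j\bmod d}\e(-ja/d)\sum_{N<n\le N+H}\mu(n)\,\e\!\left(P(n)+jn/d\right).
}
For each $j\bmod d$, the phase $P(n)+jn/d$ is an ordinary polynomial in $\R[X]$ of degree $\max\{\delta,1\}=O_\delta(1)$, so by~\eqref{eq:MSTT} the corresponding inner sum is $O_{C,\eps,\delta}\!\left(H(\log N)^{-C}\right)$, with an implied constant independent of $j$, $a$, and $d$. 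Since $\frac1d\sum_{j\bmod d}|\e(-ja/d)|=1$, the result follows.
\end{proof}
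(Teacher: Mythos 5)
Your proof is correct and follows exactly the route the paper intends: the paper does not write out a proof of this lemma at all, saying only that it follows by ``adapting the proof of Lemma~\ref{lem:Davenportish Bound}'' with~\eqref{eq:MSTT} in place of~\eqref{eq:GT-long-gen}, and you carry out precisely that adaptation. Your extra remark that the shift by $jn/d$ can raise the degree from $0$ to $1$, but that $\max\{\delta,1\}=O_\delta(1)$ so the implied constant and the admissible range for $H$ are unaffected, is a correct and worthwhile bit of bookkeeping that the paper leaves implicit.
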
 

\subsection{Sums with the divisor function} 
From now on, we use $\tau(n)$ to denote the number of positive
integer divisors of nonzero integer; this differs from the notation
used earlier in~\eqref{eq:Rama}. 

We need some elementary bounds on sums with $\tau$. 
First, we recall a classical result on mean values of powers of
the divisor function. For any fixed positive integer $k$, one has
\be\label{eq:Div}
\sum_{n\le x}\tau(n)^k 
\llsym{k} x(\log x)^{2^k-1}\qquad(x\ge 2),
\ee 
where the implied constant depends only on $k$; see, e.g.,
Iwaniec and Kowalski~\cite[eqn.~(1.81)]{IwKow} for a more precise statement.
Using~\eqref{eq:Div} and partial summation, we immediately derive
the estimates:
\be\label{eq:SumDiv1/2}
\sum_{n\le x}\frac{\tau(n)^k}{n^{1/2}}
\ll x^{1/2}(\log x)^{2^k-1}\qquad(x\ge 2)
\ee
and
\be\label{eq:SumDiv3/2}
\sum_{n>x}\frac{\tau(n)^k}{n^{3/2}}
\ll x^{-1/2}(\log x)^{2^k-1}\qquad(x\ge 2).
\ee
We also need the following simple bound: 
\be\label{eq:SumGCD}
\sum_{n\le x}\frac{\sqrt{\gcd(m,n)}}{n}\ll\tau(m)\log x
\qquad(m\in\N,~x\ge 2),
\ee
To prove this, we simply observe that
\[
\sum_{n\le x}\frac{\sqrt{\gcd(m,n)}}{n}
\le\sum_{d\mid m}d^{1/2}\ssum{n\le x\\d\mid n}\frac{1}{n}
\le\sum_{d\mid m}d^{-1/2}\ssum{n\le x/d}\frac{1}{n}
\]
and bound the final double sum crudely. 

{\large\section{Proof of Theorem~\ref{thm:main1-long}}}

\subsection{Preliminary split} 
Writing $S$ for $\sfS_\mu({\mathbf D};M)$, we have
by the orthogonality of exponential functions:
\[
S=
\int_0^1\ssum{n_1\le A\\n_2\le B\\n_3\le N}
{\tt u}_{n_1}{\tt v}_{n_2}\,\mu(n_1n_2n_3)
\e(\alpha(f(n_1)+g(n_2)+\wp(n_3)-M))\,d\alpha.
\]
Taking into account that
\be\label{eq:mobius madness}
\mu(mn)=\begin{cases}
\mu(m)\mu(n)&\quad\hbox{if $\gcd(m,n)=1$},\\
0&\quad\hbox{otherwise},
\end{cases}
\ee
it follows that
\dalign{
S&=\int_0^1\e(-\alpha M)
\ssum{n_1\le A\\n_2\le B}
{\tt u}_{n_1}{\tt v}_{n_2}\,\mu(n_1n_2)
\e(\alpha f(n_1)+\alpha g(n_2))\\
&\qquad\qquad\qquad\qquad\qquad\times
\ssum{n_3\le N\\\gcd(n_1n_2,n_3)=1}
\mu(n_3)\e(\alpha \wp(n_3))\,d\alpha.
}
Using the inclusion-exclusion principle to detect coprimality,
we can write
\be\label{eq:eureka}
S=\sum_{d\le N}\mu(d)I(d)
=\sum_{d\le D}\mu(d)I(d)+\sum_{D<d\le N}\mu(d)I(d)
\ee
with any real $D\in[2,N]$, where
\dalign{
I(d)&= \int_0^1\e(-\alpha M)
\ssum{n_1\le A\\n_2\le B\\d\mid n_1n_2}
{\tt u}_{n_1}{\tt v}_{n_2}\,\mu(n_1n_2)
\e(\alpha f(n_1)+\alpha g(n_2))\\
&\qquad\qquad\qquad\qquad\qquad\times
\ssum{n_3\le N\\d\mid n_3}
\mu(n_3)\e(\alpha \wp(n_3))\,d\alpha.
}
In what follows, we focus on bounding the two
sums on the right side of~\eqref{eq:eureka}.

\subsection{The sum over small $d$}
For each $d\le D$, Lemma~\ref{lem:Davenportish Bound} (with $a=0$
and $2C+4+1/\eps$ in place of $C$) provides the bound
\be\label{eq:Id Jd}
I(d) \llsym{C,\eps,\delta}N(\log N)^{-2C-4-1/\eps}\cdot J(d) 
\ee  
for any $C>0$, where 
\[
J(d)\defeq\int_0^1\Biggl|
\ssum{n_1\le A\\n_2\le B\\d\mid n_1n_2}
{\tt u}_{n_1}{\tt v}_{n_2}\,\mu(n_1n_2)\e(\alpha f(n_1)+\alpha g(n_2))\Biggr|\,d\alpha.
\]
Taking into account that $\mu(n_1n_2)=0$ unless $\gcd(n_1,n_2)=1$,  
the inner sum splits into subsums that are parametrized by ordered
pairs $(d_1,d_2)$ of coprime positive integers such that
$d_1d_2=d$. Using the triangle inequality, we have
\be\label{eq:Jd Kdd}
J(d)\le\ssum{d_1d_2=d\\\gcd(d_1,d_2)=1} K(d_1, d_2), 
\ee
where 
\[
K(d_1,d_2)\defeq\int_0^1\Biggl|
\ssum{n_1\le A\\n_2\le B\\d\mid n_1n_2}
{\tt u}_{n_1}{\tt v}_{n_2}\,\mu(n_1n_2)\e(\alpha f(n_1)+\alpha g(n_2))\Biggr|\,d\alpha.
\]
By~\eqref{eq:mobius madness} and the
inclusion-exclusion principle, we have
\[
\mu(n_1n_2)=\mu(n_1)\mu(n_2)\sum_{e\mid\gcd(n_1,n_2)}\mu(e)
=\mu(n_1)\mu(n_2)\ssum{
e\mid n_1,\,e\mid n_2}\mu(e). 
\]
Since $d_j$ and $e$ both divide $n_j$ if and only if the least
common multiple $\lcm[d_j,e]$ divides $n_j$, we see that
\be\label{eq:Kdd Ldde}
K(d_1,d_2)\le\sum_{e\le \max\{A,B\}} L(d_1,d_2;e),
\ee
where
\[
L(d_1,d_2;e)\defeq
\int_0^1\Biggl|\ssum{n_1\le A\\ \lcm[d_1,e]\mid n_1}
{\tt u}_{n_1}\,\mu(n_1)\e(\alpha f(n_1))
\ssum{n_2\le B\\ \lcm[d_2,e]\mid n_2}
{\tt v}_{n_2}\,\mu(n_2)\e(\alpha g(n_2))\Biggr|\,d\alpha.
\]
By the Cauchy inequality,
\begin{align*}L(d_1,d_2;e)^2 \le 
\int_0^1\Biggl|\ssum{n_1\le A\\ \lcm[d_1,e]\mid n_1} &
{\tt u}_{n_1}\,\mu(n_1)\e(\alpha f(n_1))\Biggr|^2\,d\alpha\\
&  \times\int_0^1\Biggl|\ssum{n_2\le B\\ \lcm[d_2,e]\mid n_2}
{\tt v}_{n_2}\,\mu(n_2)\e(\beta g(n_2))\Biggr|^2\,d\beta.
\end{align*} 
Recalling that $f:\cA\cap{\rm Supp}({\tt u})\to\Z$
is \emph{injective}, and $\|{\tt u}\|_\infty\le 1$,
it follows that
\[
\int_0^1\Biggl|\ssum{n_1\le A\\ \lcm[d_1,e]\mid n_1}
{\tt u}_{n_1}\,\mu(n_1)\e(\alpha f(n_1))\Biggr|^2\,d\alpha
=\ssum{n_1\le A\\ \lcm[d_1,e]\mid n_1}|{\tt u}_{n_1}|^2\mu(n_1)^2
\le\frac{A}{\lcm[d_1,e]},
\]
and similarly,
\[
\int_0^1\Biggl|\ssum{n_2\le B\\ \lcm[d_2,e]\mid n_2}
{\tt v}_{n_2}\,\mu(n_2)\e(\beta g(n_2))\Biggr|^2\,d\alpha
\le\frac{B}{\lcm[d_2,e]};
\]
consequently, we have
\[
L(d_1,d_2;e)^2\le\frac{AB}{\lcm[d_1,e]\lcm[d_2,e]}
=\frac{AB\gcd(d_1d_2,e)}{d_1d_2e^2}.
\]
Using~\eqref{eq:SumGCD}, ~\eqref{eq:Kdd Ldde}, and the bound
$\log\max\{A,B\}\le(\log N)^{1/\eps}$, we have
\[
K(d_1,d_2)\le\frac{\sqrt{AB}}{\sqrt{d_1d_2}}
\sum_{e\le\max\{A,B\}}\frac{\sqrt{\gcd(d_1d_2,e)}}{e}
\llsym{\delta}\sqrt{AB}(\log N)^{1/\eps}\cdot\frac{\tau(d_1d_2)}{\sqrt{d_1d_2}}.
\]
In view of~\eqref{eq:Jd Kdd}, we have
\be\label{eq:Jd bound}
J(d)\llsym{\delta} \sqrt{AB}(\log N)^{1/\eps}
\ssum{d_1d_2=d\\\gcd(d_1,d_2)=1}
\frac{\tau(d_1d_2)}{\sqrt{d_1d_2}}
\le \sqrt{AB}(\log N)^{1/\eps}\cdot\frac{\tau(d)^2}{\sqrt{d}}. 
\ee
Finally, using~\eqref{eq:Id Jd} followed by~\eqref{eq:SumDiv1/2},
we derive that
\dalign{
\sum_{d\le D}\mu(d)I(d)
& \llsym{C,\eps,\delta}\sqrt{AB}\,N(\log N)^{-2C-4}
\sum_{d\le D}\frac{\tau(d)^2}{\sqrt{d}}\\
&\ll \sqrt{AB}\,N (\log N)^{-2C-4}D^{1/2}(\log D)^3;
}
therefore, since $D\le N$,
\be\label{eq:sum-small-d}
\sum_{d\le D}\mu(d)I(d)
 \llsym{C,\eps,\delta}
\sqrt{AB}\,N(\log N)^{-2C-1}D^{1/2}.
\ee

\subsection{The sum over large $d$} 
\label{sec:large d}
Now suppose $D<d\le N$.
Trivially, $|I(d)|$ does not exceed the number of
tuples
\[
\qquad(n_1,n_2,n_3)\in\cA_{{\tt u}}\times\cB_{{\tt v}}\times \cN 
\qquad\bigl(\cA_{{\tt u}}\defeq\cA\cap{\rm Supp}({\tt u}),~
\cB_{{\tt v}}\defeq\cA\cap{\rm Supp}({\tt v})\bigr)
\]
for which
\be\label{eq:d | n1n2 n3}
f(n_1)+g(n_2)+\wp(n_3)=M,\qquad d\mid n_1n_2,\qquad d \mid n_3.
\ee
Since $\cN=[1,N]$, there are at most $Nd^{-1}$ possible values
of $n_3$. Furthermore, there is a divisor $d_0\mid d$ such that
$d_0\ge d^{1/2}$, and $d_0$ divides either $n_1$ or $n_2$. 
Since $\cA_{{\tt u}}\subseteq[1,A]$ and
$\cB_{{\tt v}}\subseteq[1,B]$, 
either $n_1$ or $n_2$ takes at most
$\max\{A,B\}d_0^{-1}\ll(A+B)d^{-1/2}$ distinct values,
after which the last remaining variable is uniquely determined (using
the injectivity of $f$ and $g$ on $\cA_{{\tt u}}$ and
$\cB_{{\tt v}}$, respectively). Putting everything together,
we have
\[
I(d)\ll (A+B)N\frac{\tau(d)}{d^{3/2}}.
\]
Then, using~\eqref{eq:SumDiv3/2}, we derive the bound
\be\label{eq:sum-large-d}
\sum_{d>D}\mu(d)I(d)\ll (A+B)ND^{-1/2}\log N.
\ee

\subsection{Concluding the proof} 
Substituting~\eqref{eq:sum-small-d} and~\eqref{eq:sum-large-d}  
into~\eqref{eq:eureka}, we have (since $\sqrt{AB}\le A+B$):
\[
S \llsym{C,\eps,\delta}(A+B)N(D^{1/2}(\log N)^{-2C-1}
+D^{-1/2}\log N).
\]
We take $D\defeq(\log N)^{2C+2}$,
which balances the two terms
and also satisfies the condition $D\in[2,N]$,
and the theorem follows.

{\large\section{Proof of Theorem~\ref{thm:noSiegel}}}\label{sec:zeros}

We proceed essentially as in the proof of
Theorem~\ref{thm:main1-long}, except that in place
of Lemma~\ref{lem:Davenportish Bound}, we use either
Lemma~\ref{lem:disc-noSiegel} or Lemma~\ref{lem:disc-zero strip},
leading (respectively) to 
\begin{align*}
I(d)&\ll N\er^{-10c\sqrt{\log N}}\cdot J(d),\\
\bigl|I(d)\bigr|&\le N^{b(\sigma_\bullet)+o(1)}\cdot J(d)\qquad(N\to\infty). 
\end{align*}
These bounds replace~\eqref{eq:Id Jd}.
Note that these results rely on the fact that $\wp$ is a linear polynomial (this hypothesis is
used only at this point in the proof). Next,
using~\eqref{eq:Jd bound} to bound each $J(d)$,
and summing over $d\le D$, we have (respectively)
\dalign{
\sum_{d\le D}\mu(d)I(d)
&\ll\sqrt{AB}\,N\er^{-10c\sqrt{\log N}}D^{1/2},\\
\biggl|\sum_{d\le D}\mu(d)I(d)\biggr|
&\le\sqrt{AB}\,N^{b(\sigma_\bullet)+o(1)}D^{1/2}
\qquad(N\to\infty).
}
Combining with~\eqref{eq:sum-large-d}, we have
(respectively)
\dalign{
S&\llsym{C,\eps,\delta}(A+B)N(\er^{-10c\sqrt{\log N}}D^{1/2}
+D^{-1/2}\log N),\\
S&\llsym{C,\eps,\delta}(A+B)(N^{b(\sigma_\bullet)+o(1)}D^{1/2}
+ND^{-1/2}\log N)\qquad(N\to\infty).\\
}
To prove~\eqref{eq:uni-one}, we choose
$D\defeq\er^{5c\sqrt{\log N}}$ to balance everything,
whereas for~\eqref{eq:uni-two},
we take $D\defeq N^{1-b(\sigma_\bullet)}$ (and thus,
$c(\sigma_\bullet)\defeq\frac12(b(\sigma_\bullet)+1)$).
In either case, the result follows by an easy computation.

\bigskip

{\large \section{Proof of Theorem~\ref{thm:main1-short}}}

Observe that for $H>N$ the bound follows trivially from 
Theorem~\ref{thm:main1-long}. Hence we can assume that 
$H\le N$ in what follows.

We proceed as in the proof of Theorem~\ref{thm:main1-long}, 
 using the slightly altered notation
\[
S\defeq
\int_0^1\ssum{n_1\le A\\n_2\le B\\N<n_3\le N+H}
{\tt u}_{n_1}{\tt v}_{n_2}\,\mu(n_1n_2n_3)
\e(\alpha(f(n_1)+g(n_2)+\wp(n_3)-M))\,d\alpha
\]
and
\dalign{
I(d)&\defeq\int_0^1\e(-\alpha M)
\ssum{n_1\le A\\n_2\le B\\d\mid n_1n_2}
{\tt u}_{n_1}{\tt v}_{n_2}\,\mu(n_1n_2)\e(\alpha f(n_1)+\alpha g(n_2))\\
&\qquad\qquad\qquad\qquad\qquad\times
\ssum{N<n_3\le N+H\\d\mid n_3}
\mu(n_3)\e(\alpha \wp(n_3))\,d\alpha. 
}
Let $D\in[2,D_\infty]$ (to be specified later), where
\[
D_\infty\defeq\max\{A,B,N+H\}\le N^{O_\delta(1)}.
\]

For small $d\le D$, instead of
Lemma~\ref{lem:Davenportish Bound}, we use 
Lemma~\ref{lem:Davenport  Bound-Short}
 with $C$ replaced by
\be\label{eq:Qdefn}
Q\defeq 4C+5\qquad(C>0),
\ee 
which leads to the bound (cf.\,\eqref{eq:Id Jd}):
\[
I(d) \llsym{C,\eps,\delta}H(\log N)^{-Q}\cdot J(d),
\]
where $J(d)$ is defined as before. Proceeding as before,
but using the bound $\max\{A,B\}\le N^{O_\delta(1)}$
in place of $\log\max\{A,B\}\le(\log N)^{1/\eps}$, we get that
\[
J(d)\llsym{\delta}\sqrt{AB}(\log N)
\cdot\frac{\tau(d)^2}{\sqrt{d}},
\]
and it follows that 
\be\label{eq:sum-small-d tilde}
\sum_{d\le D}\mu(d) I(d)
\llsym{C,\eps,\delta}
\sqrt{AB}\,H(\log N)^{4- Q}D^{1/2}.
\ee

Next, we need an analogue of~\eqref{eq:sum-large-d}.
Adapting the argument 
in~\S\ref{sec:large d}, we observe that~\eqref{eq:d | n1n2 n3} 
implies that for each $n_3\le N+H$,
there are no more than $Hd^{-1}+1$ possible values of $n_3$
if $d\le N+H$, and no such values 
for larger $d$. Hence, as before, we deduce the bound
\[
I(d)\ll (A+B)(Hd^{-1}+1)\tau(d)d^{-1/2}.
\]
for any $d \le N+H$, and $I(d) = 0$, otherwise.
Using both~\eqref{eq:SumDiv1/2} and~\eqref{eq:SumDiv3/2},
and taking into account that $H\le N$,
it follows that
\dalign{
\sum_{d>D}\mu(d)I(d)
&\ll (A+B)H\sum_{d>D}\frac{\tau(d)}{d^{3/2}}
+(A+B)\sum_{d\le N+H}\frac{\tau(d)}{d^{1/2}}\\
&\ll  (A+B)HD^{-1/2}\log N+(A+B)N^{1/2}\log N,
} 
and therefore
\be\label{eq:sum-large-d tilde}
\sum_{d>D}\mu(d)I(d)\ll 
(A+B)(HD^{-1/2}+N^{1/2})\log N.
\ee

In view of~\eqref{eq:eureka}, we have upon combining 
\eqref{eq:sum-small-d tilde} and~\eqref{eq:sum-large-d tilde}:
\[
S\llsym{C,\eps,\delta}
\sqrt{AB}\,H(\log N)^{4- Q}D^{1/2}
+(A+B)(HD^{-1/2}+N^{1/2})\log N.
\]
We now choose
\[
D\defeq\frac{(A+B)(\log N)^{Q-3}}{\sqrt{AB}},
\]
which balances two terms in the above bound
and also satisfies the condition that $D\in[2,D_\infty]$. 
As the choice of $Q$ in~\eqref{eq:Qdefn}
implies that $(Q-3)/2-1 = 2C$, we conclude that
\be\label{eq:Salth}
S\llsym{C,\eps,\delta}
(A^{3/4}B^{1/4}+A^{1/4}B^{3/4})H(\log N)^{-2C}
+(A+B)N^{1/2}\log N.
\ee
Now we are in a position to finish the proof.

First, note that if $B\le A(\log N)^{-C}$, 
then the trivial bound~\eqref{eq:triv-bd} implies that
\[
|S|\le BH\le AH(\log N)^{-C},
\]
and the theorem holds in this case.
On the other hand, if $A\le B(\log N)^C$,
then by~\eqref{eq:Salth} we have
\[
S\llsym{C,\eps,\delta}BH(\log N)^{-5C/4}
+B N^{1/2}(\log N)^{C+1}.
\]
Both terms on the right side are 
$O_{C,\eps,\delta}(BH(\log N)^{-C})$, with
the bound on the second term being a consequence of
the conditions impose on $H$ and $N$ in the theorem:
\[
B N^{1/2}(\log N)^{C+1}\llsym{C,\eps} BN^{5/8+\eps}(\log N)^{-C}
\le BH(\log N)^{-C}.
\]
This completes the proof.

\bigskip

{\large\section{Applications}\label{sec:applications}}

\subsection{Primes (I)}\label{sec:applPrimesI}

Let $\P$ denote the set of primes.
Let $C>0$, $x\ge 10$, and consider the
collection ${\mathbf D}$ comprised of the following data:
\begin{itemize}
\item Intervals $\cA,\cB,\cN\subseteq\N$ given by
\[
\cA=\cB\defeq[1,A],\qquad\cN\defeq[1,x]
\qquad\text{with}\quad A\le \er^{(\log x)^C};
\] 
\item The polynomial $\wp(X)=-X$;
\item Weights ${\tt u}=\{{\tt u}_n\}$
and ${\tt v}=\{{\tt v}_n\}$ given by
\[
{\tt u}_n={\tt v}_n\defeq\begin{cases}
1&\quad\hbox{if $n\in\P$},\\
0&\quad\hbox{otherwise}. 
\end{cases}
\]
\item Injective functions $f,g:\cA\cap\P\to\Z$.
\end{itemize}
Using $p,q$ to denote primes, Theorem~\ref{thm:main1-long}
(with $M=0$) yields the bound
\[
\ssum{p,q\le A\\1\le f(p)+g(q)\le x}
\mu\bigl(pq\,(f(p)+g(q))\bigr)
\llsym{\eps,C} Ax(\log x)^{-C}.
\]
Moreover, by making suitable adjustments to
the collection ${\mathbf D}$, we can establish the general bound 
\[
\ssum{a<p\le A,~b<q\le A\\1\le f(p)+g(q)\le x}
\mu\bigl((p-a)(q-b)(f(p)+g(q))\bigr)
\llsym{\eps,C} Ax(\log x)^{-C}
\]
with any $a,b\in\Z$ provided that $A\ge 2\max\{|a|,|b|\}$.
The choice $f(n)=g(n)\defeq n$ for all $n$
leads to the bound
\[
\ssum{p>a,\,q>b\\p+q\le x}
\mu\bigl((p-a)(q-b)(p+q)\bigr)
\llsym{\eps,C} x^2(\log x)^{-C}.
\]
In particular,
\be\label{eq:DBowie1a}
\ssum{
p+q\le x}\mu(p+q)\llsym{C}x^2(\log x)^{-C}
\ee
and
\be\label{eq:DBowie1b}
\ssum{
p+q\le x}\mu\bigl((p-1)(q-1)(p+q)\bigr)
\llsym{C}x^2(\log x)^{-C}.
\ee
Although we have not found any alternative approaches to proving~\eqref{eq:DBowie1b}, it is worth mentioning that the first 
bound~\eqref{eq:DBowie1a} can also be established directly
using a different (albeit more complicated) approach based on
results from the circle method combined with
standard techniques in analytic number theory.
In fact, applying partial summation,
we see that~\eqref{eq:DBowie1a} is equivalent to 
\be\label{eq:DBowie2}
\ssum{m\le x}\mu(m)R_1(m)
=\ssum{
p+q\le x}(\log p)(\log q)\mu(p+q)
\llsym{C}x^2(\log x)^{-C},
\ee
where, as in~\cite[eqn.~(3.16)]{Vau}, we define 
\[
R_1(m)\defeq\ssum{
p+q=m}(\log p)(\log q).
\] 
The function $R_1(m)$ is well known in the theory
of the binary Goldbach problem; see, e.g., Vaughan~\cite[\S3.2]{Vau}.
Using~\cite[Thm.~3.7]{Vau} and the Cauchy inequality, we deduce
the estimate
\be\label{eq:DBowie3}
\ssum{m\le x}\mu(m)R_1(m)
=\ssum{m\le x}\mu(m)m\fS_1(m)
+O_C\bigl(x^2(\log x)^{-C}\bigr),
\ee
where $\fS_1(m)$ is the singular series given by
\[
\fS_1(m)\defeq\prod_{p\,\nmid\, m}(1-(p-1)^{-2})
\prod_{p\mid m}(1+(p-1)^{-1}); 
\]
see~\cite[eqn.~(3.26)]{Vau}.
One verifies that the associated Dirichlet series
\[
D(s)\defeq\sum_{n=1}^\infty\frac{\mu(n)n\fS_1(n)}{n^s}
\]
converges absolutely in the half-plane $\sigma>2$, and
$D(s)=\zeta(s-1)^{-1}F(s)$ near the point $s=2$
for some function $F(s)$ that is analytic and nonzero at $s=2$.
Perron's formula (see~\cite[Thm.~5.2 and Cor.~5.3]{MontVau}), 
yields the bound
\[
\ssum{m\le x}\mu(m)m\fS_1(m)\llsym{C}x^2(\log x)^{-C}.
\]
Combining this with~\eqref{eq:DBowie3}, we obtain~\eqref{eq:DBowie2}.

\begin{remark} It is straightforward to verify the lower bound
$\sum_{m\le x}R_1(m)\gg  x^2$. Thus, in view of~\eqref{eq:DBowie2}, we can say that
$R_1(m)$ is orthogonal to $\mu(m)$
in the sense of  Sarnak {\rm\cite{Sarnak}}.
\end{remark}

\subsection{Primes (II)}\label{sec:applPrimesII}
As another example, let $p_j$ denote the $j$-th prime
for each $j\in\N$, and let $\pi(x)$ be the prime counting function.
Let ${\mathbf D}$ consist of the data:
\begin{itemize}
\item Intervals $\cA,\cB, \cN\subseteq\N$ given by
\[
\cA=\cB\defeq[1,\pi(x)],\qquad \cN\defeq[1,2x].
\]
\item The polynomial $\wp(X)=-X$;
\item Weights ${\tt u}=\{{\tt u}_n\}$
and ${\tt v}=\{{\tt v}_n\}$ given by the M\"obius function:
\[
{\tt u}_n={\tt v}_n=\mu(n)\qquad(n\in\N);
\]
\item Injective functions $f,g:\cA\to\Z$
defined by $f(k)=g(k)\defeq p_k$.
\end{itemize}
Then Theorem~\ref{thm:main1-long} (with $M=0$) implies
the following bound:
\[
\ssum{p_k,p_\ell\le x\\\gcd(k\,\ell,\,p_k+p_\ell)=1}
\mu^2(k\ell)\,\mu(p_k+p_\ell)\llsym{C} x^2(\log x)^{-C}
\]
Moreover, as $\wp$ is a linear polynomial in $\Z[x]$,
the stronger bounds of Theorem~\ref{thm:noSiegel} are available 
if the zeros of Dirichlet $L$-functions are sufficiently well-behaved.

\subsection{Squarefrees}

Let $C>0$, $x\ge 10$, and suppose ${\mathbf D}$ consists of the data:
\begin{itemize}
\item Intervals $\cA,\cB, \cN\subseteq\N$ given by
\[
\cA=\cB\defeq[1,A],\qquad\cN\defeq[1,x]
\qquad\text{with}\quad A\le \er^{(\log x)^C};
\]
\item The polynomial $\wp(X)=-X$;
\item For fixed $\nu\in\{1,2\}$, the weights ${\tt u}=\{{\tt u}_n\}$
and ${\tt v}=\{{\tt v}_n\}$ are given by
\[
{\tt u}_n={\tt v}_n\defeq\mu(n)^{\nu+1}\qquad(n\in\N);
\]
\item The injective functions $f,g:\cA\to\Z$
defined by $f(k)=g(k)\defeq k$.
\end{itemize}
Using $k,\ell$ to denote natural numbers,
Theorem~\ref{thm:main1-long} (with $M=0$) shows that
\[
\ssum{k,\ell\le A\\x<k+\ell\le x+H}
\mu^\nu(k\ell)\,\mu(k+\ell)
=\ssum{k,\ell\le A\\x<k+\ell\le x+H}
{\tt u}_k\,{\tt v}_\ell\,\mu(k\ell(k+\ell))
\llsym{\eps,C}A H (\log x)^{-C}.
\]
In particular, for $A\defeq x$ we have 
\[
\ssum{
k+\ell\le x}
\mu^\nu(k\ell)\,\mu(k+\ell)
\llsym{C}x^2(\log x)^{-C}\qquad(\nu=1\text{~or~}2).
\]

\subsection{Beatty sequences}
Fix $\alpha,\beta\in\R$, where $\alpha>0$ is irrational.
Let $C>0$, $x\ge 10$, and suppose ${\mathbf D}$ consists of the data:
\begin{itemize}
\item Intervals $\cA,\cB, \cN\subseteq\N$ given by
\[
\cA=\cB\defeq[1,x],\qquad\cN\defeq\bigl[1,\fl{\alpha x+\beta}\bigr];
\]
\item The bracket polynomial $\wp(X)=-\fl{\alpha X+\beta}$;
\item Trivial weights ${\tt u}_n={\tt v}_n\defeq 1$ for all $n\in\N$;
\item The injective functions $f,g:\cA\to\Z$
defined by $f(k)=g(k)\defeq k$.
\end{itemize}
Using $k,\ell$ to denote natural numbers,
Theorem~\ref{thm:main1-long} (with $M=0$) shows that
\[
\ssum{k,\ell\le x,\,n\ge 1\\k+\ell=\fl{\alpha n+\beta}}
\mu(k\ell n)\llsym{C}x^2(\log x)^{-C}
\]

\subsection{Quadratic residues} Let us write $\reduceit{n}{m}$ as 
shorthand for $n\bmod m$, the remainder when $n$ is divided by $m$.
Thus, $\reduceit{n}{m}\defeq m\{n/m\}$, and $0\le\reduceit{n}{m}<m$.
For any bracket polynomial $\wp:\Z\to\Z$ of complexity $\delta$,
let $\reduceit{\wp}{m}:\Z\to\Z$ be defined by
\[
n\mapsto\reduceit{\wp(n)}{m}=m\{\wp(n)/m\}\qquad(n\in\Z).
\]
Then $\reduceit{\wp}{m}$ is a bracket polynomial of
complexity $\delta+O(1)$.

Fix a large integer $m$. Let $C>0$, $x\ge 10$,
and suppose ${\mathbf D}$
consists of the data:
\begin{itemize}
\item Intervals $\cA,\cB, \cN\subseteq\N$ given by
\[
\cA=\cB\defeq[1,m],\qquad\cN\defeq[1,2m];
\]
\item The bracket polynomial $\wp(X)=-\reduceit{X^2}{m}$;
\item Trivial weights ${\tt u}_n={\tt v}_n\defeq 1$ for all $n\in\N$;
\item The injective functions $f,g:\cA\to\Z$
defined by $f(k)=g(k)\defeq k$.
\end{itemize}
Using $a,b,c$ to denote natural numbers,
Theorem~\ref{thm:main1-long} (with $M=0$ and $M=m$)
provides the bounds
\[
\ssum{a,b,c\le m\\a+b=\reduceit{c^2}{m}}\mu(abc)
\llsym{C}m^2(\log m)^{-C},\qquad
\ssum{a,b,c\le m\\a+b=\reduceit{c^2}{m}+m}\mu(abc)
\llsym{C}m^2(\log m)^{-C},
\]
which can be combined into the single bound
\[
\ssum{a,b,c\le m\\a+b\equiv c^2\,({\rm mod}\,m)}\mu(abc)
\llsym{C}m^2(\log m)^{-C}.
\]

\subsection{Sums with one small variable}
\label{sec:small var} 
Our underlying approach also applies to cases where one of the 
variables is restricted to a short interval. We illustrate this in 
a concrete setting with the sums $S_\mu(H,N)$ given by~\eqref{eq:Short Var}.

Similar to the binary Goldbach problem, the case $H = 1$ remains 
out of reach with current methods; in fact, it is a variant of the 
well known \emph{Chowla problem}.
As a way to approximate the case $H = 1$,  it is natural to seek nontrivial bounds for $S_\mu(H,N)$
when $H$ is as small as possible relative to $N$.

The combination of Theorems~\ref{thm:main1-long} 
and~\ref{thm:noSiegel} imply the following bounds:
\begin{itemize}
\item[$(i)$] If $H\ge\er^{(\log N)^\eps}$ with some
$\eps>0$, then for any $C>0$, we have
\[
S_\mu(H,N)\llsym{\eps,C} NH(\log H)^{-C \eps^{-1}}
\le NH(\log N)^{-C}.
\]   
\item[$(ii)$] If Siegel zeros do not exist for Dirichlet
$L$-functions, then 
\[
S_\mu(H,N)\ll NH\er^{-c\sqrt{\log H}}
\]
for some effectively computable constant $c>0$.

\item[$(iii)$] If every Dirichlet $L$-function $L(s,\chi)$
is nonzero in the half-plane $\sigma\ge\sigma_\bullet$ for some
$\sigma_\bullet<1$, then
\[
\bigl|S_\mu(H,N)\bigr|
\le NH^{c(\sigma_\bullet)+o(1)} , 
\]
where $c(\sigma_\bullet)$ is as in Theorem~\ref{thm:noSiegel}. 
\end{itemize}

We note that a question of similar spirit
has also been considered for the ternary Goldbach problem,
that is, for the sums
\[
S_\Lambda(H,N)\defeq\sum_{\substack{n_1+n_2+n_3=N\\ n_3 \le H}}
\Lambda(n_1)\Lambda(n_2)\Lambda(n_3);
\]
see the work of Cai~\cite{Cai} and the references therein. However,
up to now, only much larger values of~$H$ have been handled. 
In particular, it is shown in~\cite{Cai} that
$S_\Lambda(H,N) > 0$ provided $H > N^{11/400 + \varepsilon}$ for some fixed $\varepsilon > 0$ 
and $N$ is large enough.

\section*{Acknowledgements}
We wish to thank Terry Tao and Joni Ter\"av\"ainen for helpful
conversations and for highlighting some useful references.

During the preparation of this work,  
I.S.\ was supported in part
by ARC grants DP230100530 and DP230100534.

\end{document}